\newcommand{\mylabel}[2]{#2\def\@currentlabel{#2}\label{#1}}
\setlist[description]{leftmargin=*}
\definecolor{qqwwtt}{rgb}{0,0.4,0.2}\definecolor{ttzzqq}{rgb}{0, 0.66, 0.47} \definecolor{wrwrwr}{rgb}{0.3803921568627451,0.3803921568627451,0.3803921568627451}\definecolor{ffqqqq}{rgb}{1,0,0}
\definecolor{newblue}{RGB}{0,146,183}
\newtheorem{theorem}{Theorem}[section]
\newtheorem{lemma}[theorem]{Lemma}
\newtheorem{corollary}[theorem]{Corollary}
\theoremstyle{definition}
\newtheorem{remark}[theorem]{Remark}
\newtheorem{example}[theorem]{Example}
\newtheorem*{assumption}{Main Assumption}
\newcommand{\ben}{\begin{enumerate}}
\newcommand{\een}{\end{enumerate}}
\newcommand{\e }{\varepsilon }
\newcommand{\be}{\begin{equation}}
\newcommand{\ee}{\end{equation}}
\let\orgdescriptionlabel\descriptionlabel
\renewcommand*{\descriptionlabel}[1]{%
  \let\orglabel\label
  \let\label\@gobble
  \phantomsection
  \edef\@currentlabel{#1}%
  \let\label\orglabel
  \orgdescriptionlabel{#1}%
} \makeatother
\DeclarePairedDelimiter{\norm}{\lVert}{\rVert}
\NewDocumentCommand{\normL}{ s O{} m }{%
  \IfBooleanTF{#1}{\norm*{#3}}{\norm[#2]{#3}}_{L_2(\Omega)}%
}
\title[A weak comparison principle in tubular neighbourhoods]{A weak
comparison principle in tubular neighbourhoods of embedded
manifolds}
\author[Francesco Polizzi]{Francesco Polizzi}
\address{Dipartimento di Matematica e Informatica
\newline\indent
Universit\`a della Calabria
\newline\indent
Ponte Pietro Bucci 31B, I-87036 Arcavacata di Rende, Cosenza, Italy}
\email{polizzi@mat.unical.it}
\author[Pietro Sabatino]{Pietro Sabatino}
\address{Dipartimento di Matematica
\newline\indent
Universit\`a di Roma Tor Vergata
\newline\indent
Via della Ricerca Scientifica - 00133 Roma}
\email{pietrsabat@gmail.com}
\author[Berardino Sciunzi]{Berardino Sciunzi}
\address{Dipartimento di Matematica e Informatica
\newline\indent
Universit\`a della Calabria
\newline\indent
Ponte Pietro Bucci 31B, I-87036 Arcavacata di Rende, Cosenza, Italy}
\email{sciunzi@mat.unical.it}
\thanks{\emph{2010 Mathematics Subject
Classification}: 35B06, 35B51, 58J99}
\begin{document}
\begin{abstract}
We study weak solutions to degenerate quasilinear elliptic equations,
involving first order terms, in unbounded tubular domains. In
particular we show that, under suitable hypotheses, the
weak comparison principle holds if the domain  is narrow enough.
\end{abstract}

\maketitle

%\tableofcontents

%\textcolor{red}{[TO DO]} Explain the notation $a(x, y)$ in order to indicate
%the pullback of $a(z)$ and so on.

%\textcolor{red}{[TO DO]} Find the relation between $\nabla_g$ and
%$\nabla_{\mathbb{R}^n}$, and use it in the extimates \eqref{eq:A2r-final},
%\eqref{eq:B2r-final}, \eqref{eq:C2r-final}. The best thing would be, in my
%opinion, to put a Lemma (or a Corollary) after Remark \ref{rmk:dvol-h}.

%\textcolor{red}{[TO DO?]} Give examples (both compact and non-compact)
%where our conditions \eqref{eq:h1}, $\ldots$, \eqref{eq:h4} are satisfied.

%\textcolor{red}{[TO DO Pietro e Francesco]}
%From Subsection \ref{subsection:testfunctions},
%clarify the statement: ``Via density arguments, it is standard to see
%that\dots''.

\section{Introduction and statement of the  main results}\label{introdue}
We want to the study the \emph{weak comparison principle} for
degenerate quasilinear elliptic equations with first order terms.
More precisely, we consider $u,v\in C^1(\Omega)\cap
C^0(\overline\Omega)\cap L^\infty (\Omega)$, weak solutions to the
problem
\begin{equation}\label{Eq:WCP}
\begin{cases}
-{\hbox {\rm div}} (a(z) \nabla u )+\Lambda |\nabla u|^q\leq f(z, \,
u) & \text{
in }\Omega\\
-{\hbox {\rm div}} (a(z) \nabla v )+\Lambda |\nabla v|^q\geq f(z, \,
v) & \text{
in }\Omega,\\
\end{cases}
 \end{equation}
 where  $q\geq 1$, $\Omega\subseteq \mathbb{R}^n$ and  denoting with $z$ a point in $\mathbb{R}^n$. Equivalently for smooth solutions, by using the divergence theorem, \eqref{Eq:WCP} can be rephrased as
\begin{equation}\label{weak1}
\int_{\Omega}\,a(z)\nabla u \cdot \nabla\psi+\Lambda |\nabla
u|^q\,\psi\,dz\,\leq\,\int_{\Omega}f(z, \, u)\,\psi\,dz
\end{equation}
\begin{equation}\label{weak2}
\int_{\Omega}\,a(z)\nabla v \cdot \nabla\psi+\Lambda |\nabla
v|^q\,\psi\,dz\,\geq\,\int_{\Omega}f(z, \, v)\,\psi\,dz,
\end{equation}
for every test-function $\psi\in C^\infty_c(\Omega) $ with $\psi\geq
0$. In all the paper  $a(z)$ is a nonnegative weight satisfying Assumption  $\mathrm{\ref{eq:h1}}$. In particular, since $a(z)$
may vanish,  the operator could be degenerate at some points, causing that solutions could be not of class $C^2$. In such a case \eqref{weak1} and \eqref{weak2} represent the right way of understanding our problem and in fact all our proofs are carried out exploiting only the weak formulation.\\

As customary in the literature, we say that the  \emph{weak
comparison principle} for problem \eqref{Eq:WCP} holds in $\Omega$
if, imposing that $u\leq v$ on $\partial\Omega$, then it follows
that $u\leq v$ in the whole of $\Omega$. A leading example is
provided by the \emph{weak maximum principle}, namely by the case
when  $v=0$. It is well-known that the maximum principle does
not
hold in general domains, but it holds if the domain is sufficiently small or after imposing suitable a-priori estimates on the solutions. For weak solutions this can be seen e.g. as a consequence of the  Poincar\'{e} inequality; on the other hand, if one  considers smooth solutions then direct methods are also available, see \cite{GT83}. \\

In the present work we deal with the case of unbounded domains. Here
the situation is quite well understood for domains with flat
boundary and in particular for narrow strips. In fact, maximum and
comparison principles in strips have many applications to the study
of symmetry and monotonicity properties of the solutions, see for
instance \cite{BCN1,BCN2,BCN3,BCN5} and especially \cite{BNV}.
 The case of domains with a possible different geometry has been considered in \cite{cabre}, where deep results have been obtained by means of Alexandroff-Bakelmann-Pucci-type estimates. \\

In our analysis, we suppose that the (possibly unbounded and) smooth
domain $\Omega\subset \mathbb{R}^{n}$ is contained in a normal tube
$\mathscr{B}(M, \, \e)$, where $M \subset \mathbb{R}^n$ is a
smoothly embedded submanifold. Our crucial idea is to manipulate the
inequalities \eqref{weak1} and \eqref{weak2} by using the so-called
\emph{Fermi coordinates} associated with  the normal tube
$\mathscr{B}(M, \, \e)$, see \cite[Chapter 2]{G04}, \cite[Section
3]{PPS14}. The weak comparison principle then follows by showing
that $\widetilde{\mathcal{
    L}}_{R}$=0 for all $R >0$, where
\begin{equation*}
\widetilde{\mathcal{
    L}}_{R}\,:=\int_{\Omega_{2R}}\,a(z) \,(u-v)^+ \, |\nabla(u-v)|^2\, \mathrm{dvol}_g,
\end{equation*}
 see Section \ref{section:proof} for notation and details. The desired vanishing is established by proving that $\widetilde{\mathcal{L}}_{R}$ can be estimated in terms of $\widetilde{\mathcal{L}}_{2R}$,  see Equation \eqref{eq:theta-final}, allowing us to exploit the simple but useful Lemma \eqref{Le:L(R)}.\\

This kind of technique seems to be very flexible.  In particular,
having in mind the fact that the technique is manly based on the
weighted Sobolev inequality, we believe that it could be also used
when dealing with $p$-Laplace
inequalities as well as with fully nonlinear problems. \\

Let us now introduce some notation and terminology that will be used
in the sequel. Let $M$ be a smooth submanifold of $\mathbb{R}^n$ of
dimension $n-k$, and let $i \colon M \hookrightarrow \mathbb{R}^n$
be the embedding map. By a \emph{normal tube} of constant radius
$\varepsilon > 0$ centred at $M$ we mean a tubular neighbourhood of
$M$ given by a \emph{disjoint} union
\begin{equation*}
  \mathscr{B}(M, \, \e)=\bigsqcup_{p\in M}B(p, \, \e),
\end{equation*}
where $B(p, \, \e)$ is a $k$-dimensional ball of radius $\e$ centred
at $p \in M$ and contained in the normal subspace $N_p M \subset
\mathbb{R}^n$. Moreover, given any subset $A \subseteq M$ we define
\begin{equation} \label{eq:B_A}
  \mathscr{B}(A, \, \e) = \bigsqcup_{p\in A}B(p, \, \e) \subseteq
  \mathscr{B}(M, \, \e).
\end{equation}
We are now ready to make our basic hypothesis on the domain
$\Omega$.
\begin{assumption} %\label{ass:tubular}
The submanifold $M$ admits a normal tube $\mathscr{B}(M, \, \e)$ for
some $\e >0$ and we have $\Omega\subseteq \mathscr{B}(M, \, \e)$.
\end{assumption}

\begin{remark} \label{rmk:tubular}
Let us consider the \emph{normal exponential map}
\begin{equation*}
\mathrm{exp}^{\perp} \colon NM \longrightarrow \mathbb{R}^n,
\end{equation*}
sending the pair $(p, \, v)$, with $p \in M$ and $v \in N_pM$, to
$p+v$ (here we are identifying $T_p \mathbb{R}^n$ with
$\mathbb{R}^n$ in the standard way). We can define the \emph{normal
injectivity radius} of $M$ as the strictly positive function $\rho
\colon M \to \mathbb{R}$ that associates to $p \in M$ the supremum
$\rho(p)$ of all $\delta \leq 1$ such that the restriction of
$\mathrm{exp}^{\perp}$ to the neighbourhood of the zero section
\begin{equation*}
V_{\delta}(p):=\{(p', \, v') \in NM \; : \; |p-p'| < \delta, \, |v'|
< \delta \}
\end{equation*}
is a diffeomorphism onto its image in $\mathbb{R}^n$. Therefore $M$
admits a normal tube $\mathscr{B}(M, \, \e)$ of radius $\e$ if and
only if $\rho(p) \geq \e$ for all $p \in M$. In fact, in this case
we can identify $\mathscr{B}(M, \, \e)$ with the open neighbourhood
of $M$ in $\mathbb{R}^n$ defined by
\begin{equation*}
\{x \in \mathbb{R}^n \; : \; d(x, \, M) < \e\},
\end{equation*}
see \cite[Proposition 7.26]{ONeill83} and \cite[Theorem
10.89]{Lee03} for more details.

In particular, if $M$ is compact and $\e$ is sufficiently small then
$\mathscr{B}(M, \, \e)$ always exists, cf.  \cite[Proposition
3.7.18]{Con01} and \cite[Section 3]{PPS14}. On the other hand, there
are also plenty of non-compact submanifolds of $\mathbb{R}^n$
admitting normal tubes of constant radius, for instance all the
linear subspaces have this property.

It is important to observe that it is not possible to give
sufficient conditions for the existence of $\mathscr{B}(M, \, \e)$
in terms of curvature bounds for $M$. To see this, consider the
smooth curve $M \subset \mathbb{R}^3$  parametrized by $x \mapsto
(\sin x, \, \cos x, \, \arctan x)$, namely a spiral with the
distance between the coils tending to $0$. A straightforward
computation shows that its curvature $\kappa(x)$ satisfies
$\frac{1}{2} \leq \kappa(x) < 1$, hence it is is globally bounded,
but $M$ admits no tubular neighbourhood of constant radius.
\end{remark}
\bigskip
In addition to our Main Assumption, will also need Assumptions
$\mathrm{\ref{eq:h1}}$, $\mathrm{\ref{eq:h2}}$,
$\mathrm{\ref{eq:h3}}$, $\mathrm{\ref{eq:h4}}$ stated below.
\begin{description}
\item[Assumption \mylabel{eq:h1}{A1}] For all $p \in M$, write
\begin{equation*}
  \Omega_{p}\,:= \Omega \cap B(p, \, \e).
\end{equation*}
We require that the weight $a$ in \eqref{Eq:WCP} is a non-negative function such that $a\in
C^0(\overline\Omega)\cap L^\infty(\Omega)$ and we suppose that there
exists a constant $C_a$ such that
\begin{equation}
  \int_{\Omega_{p} }
  (a_{|\Omega_p})^{-t}\,dy^1\dots dy^{k}
    \leq
  C_a\quad \text{for some }\,t>k, \, \text{uniformly with respect to }\,p.
  \end{equation}
Here $a_{|\Omega_p}$ denotes the restriction of $a$ to $\Omega_p$
and $y^1,\dots, y^k$ are the Euclidean coordinates in the
$k$-dimensional linear subspace $N_pM$ of $\mathbb{R}^n$ containing
$\Omega_p$. We observe that this kind of condition is quite
well-known in the context of
weighted Sobolev spaces since of the works \cite{murthy,Tru}. \\

\item[Assumption \mylabel{eq:h2}{A2}] $f(z,\, \cdot)$ is a continuous function, uniformly
Lipschitz with respect to $z$. In other words, for every $m>0$ there
is a positive
 constant
$L_f=L_f(m)$ such that for every $z \in \Omega$ and every $ u, \, v
\in [-m, \, m]$ we have
\begin{equation}
  \norm{f(z, \, u) - f(z, \, v)} \le L_f | u-v |.
\end{equation}
As a leading example we may consider the case
\begin{equation*}
f(z,\, u)=h(z)g(u),
\end{equation*}
where $h\in L^\infty (\Omega)$ and $g$ is a locally Lipschitz,
 continuous function. \\

\item[Assumption \mylabel{eq:h3}{A3}]
We consider an oriented  atlas of $M$
\begin{equation}
  \label{eq:atlas-Gamma}
  \{(U_\alpha, \, \phi_\alpha)\}, \quad M =
\bigcup_{\alpha}U_{\alpha}, \quad \phi_{\alpha} \colon U_{\alpha}
\stackrel{\simeq}{\longrightarrow} V_{\alpha} \subset
\mathbb{R}^{n-k}
\end{equation}
such that the functions $i_{\alpha} \circ \phi^{-1}_\alpha \colon
V_{\alpha} \longrightarrow \mathbb{R}^n$ (where $i_{\alpha} \colon
U_{\alpha} \hookrightarrow \mathbb{R}^n$ denotes the restriction of
$i \colon  M \hookrightarrow \mathbb{R}^n$ to $U_{\alpha}$),
together with their first and second derivatives, are uniformly
bounded with respect to $\alpha$.

\begin{remark} An atlas as above exists for \emph{every} smooth manifold $M$. In fact, let us start with an atlas $\{(U_\alpha, \, \phi_\alpha')\}$ given by a countable basis of precompact smooth coordinate balls (it exists by \cite[Lemma 1.11]{Lee03}). On each chart $U_{\alpha}$, the functions $i_{\alpha} \circ (\phi_\alpha')^{-1}$ and their derivatives are bounded. We now set $\phi_{\alpha}:=A_{\alpha}\phi_\alpha'$, where $A_{\alpha}$ is a positive constant; then, provided that we choose the $A_{\alpha}$ large enough, we can make the bounds above uniform on $\alpha$.   \\
\end{remark}

\item[Assumption \mylabel{eq:h4}{A4}]
There exist a point $\bar p\in M$ and constants $C_1, \gamma, \, R_0
> 0$ such that
\begin{equation} \label{eq:ball}
  \mathrm{vol} \big( B_{M}(\bar p, \, R) \big)\le C_1 R^{\gamma}
\end{equation}
for all $R > R_0$. Here we define
\begin{equation*}
  B_{M}(\bar p, \, R) := \{ p \in M \ | \ d_M(\bar p, \, p) < R \},
\end{equation*}
where $d_M$ denotes the distance function induced on $M$ by the
pull-back of the Euclidean metric of $\mathbb{R}^n$, and
$\mathrm{vol} \big( B_{M}(\bar p, \, R) \big)$ stands for the
measure of $B_{M}(\bar p, \, R)$, computed again with respect to the
Euclidean metric on $M$.
\begin{remark} \label{rem:geodesic-ball}
It is known that estimate \eqref{eq:ball} holds (at every point
$\bar{p} \in M$) if $M$ is a complete Riemannian manifold with
non-negative Ricci curvature (since the volume is an intrinsic
geometric invariant, the fact that $M$ is a submanifold of
$\mathbb{R}^n$ is irrelevant here). Indeed, in this case, the
Bishop-Gromov inequality gives
\begin{equation*}
  \mathrm{vol} \big( B_{M}(\bar p, \, R) \big)\leq \Gamma_{\dim(M)},
\end{equation*}
where $\Gamma_{\dim(M)}$ stands for the volume of the Euclidean ball
of $\mathbb{R}^{\dim (M)}$, see \cite[Lemma 1.5 p. 247]{Pet98}.

On the other hand, without the non-negativity assumption on the
Ricci curvature, \eqref{eq:ball} in general does not hold. For
instance, the volume of a ball of radius $R$ in a symmetric space
$M$ of non-compact type grows exponentially with $R$, cf. \cite[p.
209]{HTT3}. In order to have an embedded example of the last
situation in any dimension $m$, recall that the hyperbolic space
$\mathbb{H}^m$ admits a smooth, isometric embedding into
$\mathbb{R}^n$ with $n=6m-5$, see \cite{Bl55}.
\end{remark}
\end{description}
\bigskip

We are now ready to state our main result.
\begin{theorem}\label{th:wcpstrip}
  Let $\Omega\subseteq  \mathscr{B}(M, \, \e) \subseteq \mathbb{R}^n$
  be a smooth domain
and let $u, \, v\in C^1(\Omega)\cap C^0(\overline\Omega)\cap
W^{1,\infty} (\Omega)$ be weak solutions to \eqref{Eq:WCP}, with
\begin{equation*}
u\leq v \quad \text{on}\; \; \partial\Omega\,.
\end{equation*}
Assume moreover that Assumptions $\mathrm{\ref{eq:h1}}$,
$\mathrm{\ref{eq:h2}}$, $\mathrm{\ref{eq:h3}}$ and
$\mathrm{\ref{eq:h4}}$ are satisfied. Then there exists a
positive constant
\begin{equation*}
\e_0\,:=\,\e_0(\|u\|_{W^{1,\infty}}, \, \|v\|_{W^{1,\infty}}, \, t,
\, a, \, q, \, n, \, k, \, f, \Lambda, \, \gamma)
\end{equation*}
such that, if $0<\e<\e_0$, then
\begin{equation*}
u\leq v\quad\text{in}\; \; \Omega\,.
\end{equation*}
\end{theorem}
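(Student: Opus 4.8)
The plan is to carry out the scheme announced in the Introduction. Passing to \emph{Fermi coordinates} adapted to the normal tube, one covers $\mathscr{B}(M,\e)$ (using the atlas of Assumption~\ref{eq:h3} together with the normal exponential map, cf.\ \cite[Chapter 2]{G04}, \cite[Section 3]{PPS14}) by charts in which a point $z$ is described by a pair $(x,y)$, $x$ a local coordinate on $M$ and $y\in\mathbb{R}^{k}$, $|y|<\e$, parametrising the normal ball. The Euclidean metric pulls back to a Riemannian metric $g=g(x,y)$ with $dz=\mathrm{dvol}_g$, and the uniform bounds on the embedding and its first two derivatives in Assumption~\ref{eq:h3} guarantee that, for $\e$ small, $g_{ij}$, $g^{ij}$ and $\sqrt{\det g}$ are bounded above and away from zero by constants depending only on $n$, $k$ and those bounds, uniformly over the whole tube. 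Rewriting \eqref{weak1}--\eqref{weak2} in these coordinates and subtracting, with $w:=u-v$ one gets, for every admissible $\psi\ge 0$,
\begin{equation}\label{eq:plan-diff}
\int_{\Omega} a\,g(\nabla w,\nabla\psi)\,\mathrm{dvol}_g \;\le\; \int_{\Omega}\bigl(f(z,u)-f(z,v)\bigr)\psi\,\mathrm{dvol}_g \;+\;\Lambda\!\int_{\Omega}\bigl(|\nabla v|^{q}-|\nabla u|^{q}\bigr)\psi\,\mathrm{dvol}_g .
\end{equation}

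Next I choose the test function. Fix the point $\bar p$ of Assumption~\ref{eq:h4}, take $\zeta\in C^{\infty}(\mathbb{R})$ with $\zeta\equiv1$ on $(-\infty,1]$, $\zeta\equiv0$ on $[2,\infty)$, $|\zeta'|\le2$, and set $\eta_{R}(z):=\zeta\bigl(d_{M}(\bar p,\pi(z))/R\bigr)$, with $\pi\colon\mathscr{B}(M,\e)\to M$ the tube projection; then $\eta_{R}$ is Lipschitz, \emph{constant on each fibre}, equal to $1$ on $\mathscr{B}(B_{M}(\bar p,R),\e)$, supported in $\mathscr{B}(B_{M}(\bar p,2R),\e)$, with $|\nabla\eta_{R}|\le C/R$ (Assumption~\ref{eq:h3} again controlling $\mathrm{d}\pi$). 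Since $w^{+}=(u-v)^{+}\in C^{0}(\overline\Omega)\cap W^{1,\infty}(\Omega)$ vanishes on $\partial\Omega$, the function $\psi:=(w^{+})^{2}\eta_{R}^{2}$ is nonnegative, compactly supported in $\overline\Omega$, vanishes on $\partial\Omega$, hence admissible in \eqref{eq:plan-diff} after a routine approximation. Inserting it, using $\nabla w=\nabla w^{+}$ a.e.\ on $\{w>0\}$, the principal term yields $2\int_{\Omega}a\,w^{+}|\nabla w^{+}|^{2}\eta_{R}^{2}\,\mathrm{dvol}_g$ up to one crossed term; writing $\phi:=(w^{+})^{3/2}$, so that $w^{+}|\nabla w^{+}|^{2}=\tfrac49|\nabla\phi|^{2}$ and $\widetilde{\mathcal{L}}_{R}=\tfrac49\int_{\Omega_{2R}}a|\nabla\phi|^{2}\,\mathrm{dvol}_g$, and bounding $|f(z,u)-f(z,v)|\le L_{f}w^{+}$ on $\{w>0\}$ (Assumption~\ref{eq:h2}, $L_{f}=L_{f}(\max(\|u\|_{\infty},\|v\|_{\infty}))$) together with $\bigl||\nabla u|^{q}-|\nabla v|^{q}\bigr|\le C(q,\|u\|_{W^{1,\infty}},\|v\|_{W^{1,\infty}})|\nabla w^{+}|$ there, Young's inequality and $0\le w^{+}\le 2m$ give, after absorbing the gradient terms,
\begin{equation}\label{eq:plan-abs}
c_{0}\int_{\Omega}a\,|\nabla\phi|^{2}\eta_{R}^{2}\,\mathrm{dvol}_g\;\le\;C\int_{\Omega}\Bigl(a\,\phi^{2}|\nabla\eta_{R}|^{2}+\tfrac1a\,\phi^{2}\eta_{R}^{2}+\phi^{2}\eta_{R}^{2}\Bigr)\mathrm{dvol}_g ,
\end{equation}
with $c_{0}>0$ and $C=C(m,q,\Lambda,L_{f},\|a\|_{\infty})$; note that the first-order term $\Lambda|\nabla u|^{q}$ is precisely what forces the weight $a^{-1}$ onto the right-hand side.

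Because $\eta_{R}$ is constant on fibres, I integrate \eqref{eq:plan-abs} in the normal variables first. On a fibre $\Omega_{p}=\Omega\cap B(p,\e)$ the function $\phi$ vanishes on all of $\partial\Omega_{p}$ inside $\overline{B(p,\e)}$: indeed $\Omega\subseteq\mathscr{B}(M,\e)$ forces $\Omega\cap\partial B(p,\e)=\emptyset$, so $\partial\Omega_{p}\subseteq\partial\Omega$, where $w^{+}=0$. Combining Hölder's inequality (here $t>k$ enters, so that the relevant exponent lies below the Sobolev exponent in dimension $k$), the weighted Sobolev inequality on $B(p,\e)$ in the spirit of \cite{murthy,Tru} — whose constant is controlled through Assumption~\ref{eq:h1}, uniformly in $p$ — and the smallness $|\Omega_{p}|\le c_{k}\e^{k}$, one obtains, uniformly in $p$,
\begin{equation*}
\int_{\Omega_{p}}\bigl(\tfrac1a\,\phi^{2}+\phi^{2}\bigr)dy\;\le\;C(C_{a},t,k)\,\e^{\alpha}\int_{\Omega_{p}}a\,|\nabla\phi|^{2}\,dy,\qquad\alpha=\alpha(t,k)>0 .
\end{equation*}
Reinserting this into \eqref{eq:plan-abs} (with $|\nabla\eta_{R}|\le C/R$, $R\ge1$), integrating over $M$ and using the two-sided bound on $\sqrt{\det g}$, the crossed term is absorbed and one is left with the self-improving estimate \eqref{eq:theta-final}, $\widetilde{\mathcal{L}}_{R}\le\theta(\e)\widetilde{\mathcal{L}}_{2R}$ for all $R\ge1$, with $\theta(\e)=C\e^{\alpha}\to0$ as $\e\to0$. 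By Assumption~\ref{eq:h4} and the tube volume bound, $\widetilde{\mathcal{L}}_{R}\le C'R^{\gamma}$ for $R>R_{0}$, and $\widetilde{\mathcal{L}}_{R}$ is finite and non-decreasing; choosing $\e_{0}$ so that $\theta(\e)<\min(1,2^{-\gamma})$ whenever $\e<\e_{0}$, the iteration $\widetilde{\mathcal{L}}_{R}\le\theta(\e)^{m}\widetilde{\mathcal{L}}_{2^{m}R}\le C'R^{\gamma}\bigl(2^{\gamma}\theta(\e)\bigr)^{m}\to0$ — which is exactly Lemma~\ref{Le:L(R)} — gives $\widetilde{\mathcal{L}}_{R}=0$ for every $R>0$. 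Hence $a\,w^{+}|\nabla w^{+}|^{2}=0$ a.e.\ in $\Omega$; since $a>0$ a.e.\ (as $a^{-t}\in L^{1}_{\mathrm{loc}}$ by Assumption~\ref{eq:h1}), $\nabla w^{+}=0$ a.e.\ on $\Omega$, so $w^{+}$ is locally constant and, vanishing on $\partial\Omega$, vanishes identically on each connected component of $\Omega$; that is, $u\le v$ in $\Omega$.

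The main obstacle I anticipate is the third step: transporting the Euclidean weak formulation through the Fermi change of coordinates while keeping every constant uniform in $p\in M$ (this is the role of Assumption~\ref{eq:h3}), and — above all — proving the fibrewise weighted Sobolev inequality with a constant that \emph{genuinely tends to zero as $\e\to0$}. This demands a careful accounting of how the Murthy--Trudinger constant on $B(p,\e)$, the Hölder exponents (where $t>k$ is needed), and the volume factor $\e^{k}$ combine; the presence of the first-order term $\Lambda|\nabla u|^{q}$, which unavoidably brings in the weight $a^{-1}$ and hence Assumption~\ref{eq:h1}, is what makes this more delicate than in the non-degenerate or first-order-free cases. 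The remaining ingredients — admissibility of the test function, the interplay with the base-point cutoff, the growth bound from Assumption~\ref{eq:h4}, and the final extraction of $w^{+}\equiv0$ — are routine.
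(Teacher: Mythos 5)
Your proposal follows essentially the same route as the paper: Fermi coordinates with the uniform bounds of Assumption \ref{eq:h3}, a fibre-constant cut-off at scale $R$ on the base, the test function $[(u-v)^+]^{2}\varphi_R^2$, Young's inequality to split off the first-order and weighted terms, the fibrewise Murthy--Trudinger weighted Sobolev inequality (uniform in $p$ by Assumption \ref{eq:h1}) producing the small factor $\e^{\alpha}$, and finally the estimate $\widetilde{\mathcal{L}}_{R}\le\theta\,\widetilde{\mathcal{L}}_{2R}$ combined with the polynomial growth from Assumption \ref{eq:h4} and the iteration of Lemma \ref{Le:L(R)}. The argument is correct and matches the paper's proof in all essential steps, including the final extraction of $u\le v$ from the vanishing of $a\,(u-v)^+|\nabla(u-v)^+|^2$.
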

Apart from this Introduction, the paper contains two more sections.
More precisely, in Section \ref{section2} we collect some
preliminary results that are needed in what follows, whereas Section
\ref{section:proof} is devoted to the proof of Theorem
\ref{th:wcpstrip}.

\section{Preliminaries}\label{section2}

\subsection{A useful lemma}
We start by recalling a useful lemma already contained and exploited
in \cite{Fms2012}. For the reader's convenience we also provide the
proof.
\begin{lemma} \label{Le:L(R)}
Let $\theta >0$ and $\gamma>0$ be such that $\theta < 2^{-\gamma}$.
Moreover, let $R_0>0$, $C>0$ and
\begin{equation*}
\mathcal{L} \colon (R_0, \, + \infty) \rightarrow \mathbb{R}
\end{equation*}
be a non-negative and non-decreasing function such that
\begin{equation}\label{eq:L}
\begin{cases}
\mathcal{L}(R)\leq \theta \mathcal{L}(2R)+g(R) & \textrm{for all } R>R_0\\
\mathcal{L}(R)\leq CR^{\gamma} & \textrm{for all } R >R_0,
\end{cases}
\end{equation}
where $g \colon (R_0, +\infty)\rightarrow \mathbb{R}^+$ is such that
\begin{equation*}
\lim_{R\rightarrow +\infty}g(R)=0 .
\end{equation*}
Then
\begin{equation*}
\mathcal{L}(R)=0 \quad \textrm{for all } R > R_0.
\end{equation*}
\end{lemma}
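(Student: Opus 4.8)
The plan is to iterate the first inequality in \eqref{eq:L} and show that the contribution of the $\mathcal{L}(2^mR)$ term disappears in the limit, thanks to the polynomial bound $\mathcal{L}(R)\le CR^\gamma$ together with the hypothesis $\theta<2^{-\gamma}$, while the accumulated $g$-terms vanish because $g(R)\to 0$. Concretely, fix $R>R_0$ and apply $\mathcal{L}(R)\le\theta\mathcal{L}(2R)+g(R)$ repeatedly: after $m$ steps one gets
\begin{equation*}
\mathcal{L}(R)\le\theta^m\mathcal{L}(2^mR)+\sum_{j=0}^{m-1}\theta^j g(2^jR).
\end{equation*}
For the first term, insert the polynomial bound: $\theta^m\mathcal{L}(2^mR)\le C\,\theta^m (2^mR)^\gamma = CR^\gamma(2^\gamma\theta)^m$, and since $2^\gamma\theta<1$ by assumption, this tends to $0$ as $m\to\infty$.

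The remaining point is to control $\sum_{j=0}^{m-1}\theta^j g(2^jR)$ and show it can be made arbitrarily small. Here I would use that $g\ge 0$, that $g(R)\to 0$ at infinity, and that $\theta<1$ (which follows from $\theta<2^{-\gamma}\le 1$). Split the sum: for any fixed threshold, the tail $\sum_{j\ge J}\theta^j g(2^jR)$ is bounded by $(\sup g)\sum_{j\ge J}\theta^j$, which is small for $J$ large uniformly in $R$; and for the finitely many initial terms $j<J$, each $g(2^jR)\to 0$ as $R\to\infty$. Alternatively, and more cleanly: since $\mathcal{L}$ is non-decreasing and non-negative, it suffices to prove $\mathcal{L}(R)=0$ for all $R$ large, because then $\mathcal{L}\equiv 0$ on $(R_0,+\infty)$ by monotonicity. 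So fix $\delta>0$; choose $J$ with $(\sup_{(R_0,\infty)}g)\sum_{j\ge J}\theta^j<\delta/2$ (possible as $\sup g<\infty$, $g$ being positive and vanishing at infinity hence bounded), then choose $R$ large enough that $g(2^jR)<\delta/(2J)$ for all $j<J$ and that $CR^\gamma(2^\gamma\theta)^m$ is eventually small; letting $m\to\infty$ yields $\mathcal{L}(R)\le\delta$. Since $\delta$ was arbitrary, $\mathcal{L}(R)=0$ for $R$ large, and monotonicity finishes the proof.

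The only mild subtlety — the step I would be most careful about — is the uniformity in the $g$-sum: one must not let the number of terms $J$ depend on $R$ in a way that reintroduces a large contribution. Isolating the tail of the geometric series first (using boundedness of $g$), and only afterwards sending $R\to\infty$ to kill the fixed finite head, resolves this cleanly. Everything else is a routine geometric-series estimate.
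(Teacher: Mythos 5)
Your argument is correct, and it follows the same basic strategy as the paper (iterate the recursive inequality, kill the leading term via the polynomial bound and $2^{\gamma}\theta<1$, then use monotonicity to propagate the vanishing down to all $R>R_0$), but you handle the perturbation $g$ differently. The paper absorbs $g$ into the contraction factor \emph{before} iterating: it picks $\theta<\theta_1<2^{-\gamma}$ and claims $\theta\mathcal{L}(2R)+g(R)\le\theta_1\mathcal{L}(2R)$ for $R$ large, after which the iteration produces the clean bound $\mathcal{L}(R)\le C(2^{\gamma}\theta_1)^m R^{\gamma}$ with no extra sum. You instead carry the accumulated term $\sum_{j=0}^{m-1}\theta^j g(2^jR)$ through the iteration and control it by splitting off the tail of the geometric series. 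Your route is slightly longer but more robust: the paper's absorption step implicitly requires $g(R)\le(\theta_1-\theta)\mathcal{L}(2R)$, which is delicate when $\mathcal{L}(2R)$ is small relative to $g(R)$, whereas your head/tail splitting needs no such comparison. One small repair on your side: $\sup_{(R_0,\infty)}g$ need not be finite just because $g>0$ and $g(R)\to0$ (no continuity of $g$ is assumed, and $g$ could blow up near $R_0$); but since every argument $2^jR$ appearing in your sum is $\ge R$, it suffices to take the supremum of $g$ over a tail $[R_1,\infty)$, where it is finite precisely because $g$ vanishes at infinity. Also note that the threshold beyond which $\mathcal{L}(R)\le\delta$ depends on $\delta$; this is harmless, since for any fixed $R$ and any $\delta$ monotonicity gives $\mathcal{L}(R)\le\mathcal{L}(R')\le\delta$ for a suitable larger $R'$, whence $\mathcal{L}(R)=0$.
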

\begin{proof}
Choose $\theta_1$ such that $ \theta <\theta_1<2^{- \gamma}.$  Then
there exists $ R_1=R_1(\theta_1) \geq R_0$ such that
\begin{equation*}
\theta \mathcal{L}(2R)+g(R) \leq \theta_1 \mathcal{L}(2R)  \quad
\textrm{for all } R \geq R_1,
\end{equation*}
and so, using the first inequality in \eqref{eq:L}, we get
\begin{equation}\label{eq:L(R)}
\mathcal{L}(R)\leq \theta_1 \mathcal{L}(2R) \quad \textrm{for all }
R \geq R_1.
\end{equation}
Iterating \eqref{eq:L(R)} and using the second inequality in
\eqref{eq:L} we obtain, for all $m \in \mathbb{N}^+$ and $R  \geq
R_1$,
\begin{align} \label{eq:L1}
0 \leq \mathcal{L}(R) & \leq \, \theta_1^{m} \mathcal{L}(2^{m}R)\\
 &\leq  \, C \theta_1^{m}(2^{m}R)^{\gamma} \\ 
&= \, C(2^{\gamma}\theta_1)^{m}R^{\gamma}.
\end{align}
Since $0 < 2^{\gamma} \theta_1 < 1$, by \eqref{eq:L1} we deduce
\begin{equation*}
0 \leq \mathcal{L}(R) \leq \lim _{m \rightarrow  +\infty}
C(2^{\gamma} \theta_1)^{m}R^\gamma=0  \quad \textrm{for all } R \geq
R_1,
\end{equation*}
that implies $\mathcal{L}(R) =0$ for all $R \geq R_1$. By Assumption
$\mathcal{L}$ is non-decreasing, so the claim follows.
\end{proof}

\subsection{Weighted Sobolev Spaces}
The  \emph{weighted Sobolev space}  with weight $\rho$ is defined as
the space $W^{1, \,2}(\Omega, \,\rho)$ of those functions in
$L^2(\Omega)$ having distributional derivative in $\Omega$ for which
the norm
\begin{equation}\label{normaaniso4}
\begin{split}
\|v\|_{W^{1,\,2}(\Omega, \,\rho)}=\|v\|_{L^2(\Omega)}+\|\nabla
v\|_{L^2(\Omega, \,\rho)} =\Big(\int_\Omega v^2\Big)^{\frac{1}{2}}+
\Big(\int_\Omega |\nabla v|^2\,\rho\Big)^{\frac{1}{2}}
\end{split}
\end{equation}
is bounded. The case $\rho=1$ corresponds to the classical Sobolev
space $W^{1, \, 2}(\Omega)$. We  also define the space $H^{1, \,
2}(\Omega, \, \rho)$ as the closure of $C^\infty (\Omega) \cap W^{1,
\, 2}(\Omega, \, \rho)$ and the space $H_0^{1, \, 2}(\Omega, \,
\rho)$ as the closure of $C_0^\infty (\Omega)$, with respect to the
norm \eqref{normaaniso4}. Let us now recall from \cite{murthy,Tru}
the following

\begin{theorem}\label{bvbdvvbidvldjbvlb}
Let $\mathcal{D} \subset \mathbb{R}^k$ be a smooth, bounded domain,
$\rho\in L^1(\mathcal{D})$ and $ C_\rho$ a constant such that
  \begin{equation*}
  \int_{\mathcal{D} } \frac{1}{\rho^t}\leq  C_\rho \qquad \text{for some }\,t>\frac{k}{2}.
  \end{equation*}
Then, for $1+\frac{1}{t}>\frac{2}{k}$, setting
\begin{equation*}
\frac{1}{ 2^*(t)}:=\frac{1}{2}-\frac{1}{k}+\frac{1}{2t}
\end{equation*}
it results $2<2^*(t)<+\infty$ and it follows that the space $H^{1,
\, 2}_{0}(\mathcal{D}, \, \rho)$ is continuously embedded in
$L^q(\mathcal{D})$ for $1\leqslant q\leqslant 2^*(t)$. More
precisely, there exists a constant $C_{t, \, k}$ such that
\begin{equation}\label{Sobolev}
\|w\|_{L^{2^*(t)}(\mathcal{D})}^2 \leqslant C_{t, \, k} \,
C_\rho^{-t} \|\nabla w\|_{L^2(\mathcal{D}, \,
\rho)}^2={C}_S\int_{\mathcal{D}}\, |\nabla w|^2\rho,
\end{equation}
for any $w\in H^{1, \, 2}_{0}(\mathcal{D}, \, \rho)$, where we set
$C_S=C_S(t, \, k, \, \rho):=\, C_{t, \, k}  C_\rho^{-t}$.
\end{theorem}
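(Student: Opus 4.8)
The plan is to deduce \eqref{Sobolev} from the classical (unweighted) Gagliardo--Nirenberg--Sobolev inequality
\[
\|u\|_{L^{k/(k-1)}(\mathbb{R}^k)}\,\le\,c_k\,\|\nabla u\|_{L^1(\mathbb{R}^k)},\qquad u\in C^\infty_c(\mathbb{R}^k),
\]
valid for $k\ge 2$ with $c_k$ a purely dimensional constant, upgraded to the weighted setting by a single Hölder inequality with three factors that feeds in the integrability hypothesis $\int_{\mathcal D}\rho^{-t}\le C_\rho$. Two preliminary remarks. First, from the definition of $2^*(t)$ one checks by elementary algebra that $t>k/2$ is equivalent to $2<2^*(t)$, that $1+\tfrac1t>\tfrac2k$ is equivalent to $2^*(t)<+\infty$, and that
\[
2^*(t)=\frac{2tk}{tk-2t+k},\qquad \frac{k-1}{k}-\frac{t-1}{2t}=\frac{1}{2^*(t)}\,,
\]
the last identity being the algebraic engine of the absorption step below. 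Second, since $H^{1,2}_0(\mathcal D,\rho)$ is by definition the closure of $C^\infty_c(\mathcal D)$ with respect to \eqref{normaaniso4}, it suffices to prove \eqref{Sobolev} for $w\in C^\infty_c(\mathcal D)$ and then pass to the limit: if $w_n\to w$ with $w_n\in C^\infty_c(\mathcal D)$ in the norm \eqref{normaaniso4}, the bound just established makes $\{w_n\}$ bounded in $L^{2^*(t)}(\mathcal D)$, and weak lower semicontinuity of the $L^{2^*(t)}$ norm (using $w_n\to w$ in $L^2$, hence a.e.\ along a subsequence) passes the inequality to $w$. The case $k=1$ amounts to a one--dimensional weighted Hardy inequality, contained in \cite{murthy,Tru}; we assume $k\ge 2$ from now on.

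Fix $w\in C^\infty_c(\mathcal D)$ and set
\[
\beta:=\frac{2t(k-1)}{t(k-2)+k}\,.
\]
Since $t>k/2\ge 1$ for $k\ge2$ we have $t>1$, hence the denominator is positive and $\beta>1$; in particular $|w|^\beta$ is a Lipschitz, compactly supported function, to which the $L^1$ Sobolev inequality still applies (by density). As $|\nabla(|w|^\beta)|=\beta\,|w|^{\beta-1}|\nabla w|$ a.e., this yields
\[
\Big(\int_{\mathcal D}|w|^{\,\beta k/(k-1)}\,dx\Big)^{(k-1)/k}\,\le\,c_k\,\beta\int_{\mathcal D}|w|^{\beta-1}\,|\nabla w|\,dx\,.
\]
The number $\beta$ has been chosen so that $\beta k/(k-1)=2^*(t)$, so the left--hand side equals $\|w\|_{L^{2^*(t)}(\mathcal D)}^{\,2^*(t)(k-1)/k}$.

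I now estimate the right--hand integral by Hölder's inequality applied to the three factors $|w|^{\beta-1}$, $|\nabla w|\,\rho^{1/2}$ and $\rho^{-1/2}$, with the conjugate exponents $\tfrac{2t}{t-1}$, $2$ and $2t$ (admissible because $t>1$, and $\tfrac{t-1}{2t}+\tfrac12+\tfrac1{2t}=1$). The middle factor contributes $\|\nabla w\|_{L^2(\mathcal D,\rho)}$, the last contributes $\big(\int_{\mathcal D}\rho^{-t}\big)^{1/(2t)}\le C_\rho^{1/(2t)}$, and the first contributes $\big(\int_{\mathcal D}|w|^{(\beta-1)\cdot 2t/(t-1)}\,dx\big)^{(t-1)/(2t)}$; again by the choice of $\beta$ one has $(\beta-1)\cdot\tfrac{2t}{t-1}=2^*(t)$, so this factor equals $\|w\|_{L^{2^*(t)}(\mathcal D)}^{\,2^*(t)(t-1)/(2t)}$. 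Combining the two displays,
\[
\|w\|_{L^{2^*(t)}(\mathcal D)}^{\,2^*(t)(k-1)/k}\,\le\,c_k\,\beta\,C_\rho^{1/(2t)}\;\|w\|_{L^{2^*(t)}(\mathcal D)}^{\,2^*(t)(t-1)/(2t)}\;\|\nabla w\|_{L^2(\mathcal D,\rho)}\,.
\]
As $\|w\|_{L^{2^*(t)}(\mathcal D)}$ is finite for $w\in C^\infty_c(\mathcal D)$, I divide it across; by the identity above the net exponent on the left is $2^*(t)\big(\tfrac{k-1}{k}-\tfrac{t-1}{2t}\big)=1$, so $\|w\|_{L^{2^*(t)}(\mathcal D)}\le c_k\,\beta\,C_\rho^{1/(2t)}\,\|\nabla w\|_{L^2(\mathcal D,\rho)}$, and squaring gives \eqref{Sobolev} with $C_{t,k}=c_k^2\beta^2$. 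Finally, since $\mathcal D$ is bounded and $2^*(t)>2$, Hölder's inequality gives $\|w\|_{L^q(\mathcal D)}\le |\mathcal D|^{\,1/q-1/2^*(t)}\|w\|_{L^{2^*(t)}(\mathcal D)}$ for all $1\le q\le 2^*(t)$, yielding the continuous embedding $H^{1,2}_0(\mathcal D,\rho)\hookrightarrow L^q(\mathcal D)$.

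The point that requires most care is the interplay between admissibility and absorption: one needs $t>1$ both for the Hölder exponent $\tfrac{2t}{t-1}$ to be positive and for $|w|^\beta$ to be a legitimate test function, and one needs $\tfrac{k-1}{k}>\tfrac{t-1}{2t}$ — equivalently $2^*(t)<\infty$ — in order to absorb the $L^{2^*(t)}$ norm; these are precisely where the hypotheses $t>k/2$ and $1+\tfrac1t>\tfrac2k$ are consumed. The a priori finiteness of $\|w\|_{L^{2^*(t)}}$ that makes the division legitimate is exactly why the argument must be run first on $C^\infty_c(\mathcal D)$ and only afterwards extended by the density and lower--semicontinuity step above; running it directly on $H^{1,2}_0(\mathcal D,\rho)$ would be the natural pitfall.
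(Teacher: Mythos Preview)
The paper does not give its own proof of this theorem: it is stated as a result ``recall[ed] from \cite{murthy,Tru}'' and no argument is supplied. So there is no paper proof to compare against; what can be said is that your argument is precisely the classical one from those references --- apply the unweighted $L^1$ Sobolev inequality to $|w|^\beta$, split the right-hand side by a three-term H\"older inequality with exponents $\tfrac{2t}{t-1},\,2,\,2t$, and absorb. The algebra (the identities $\beta k/(k-1)=2^*(t)$, $(\beta-1)\tfrac{2t}{t-1}=2^*(t)$, and $\tfrac{k-1}{k}-\tfrac{t-1}{2t}=\tfrac{1}{2^*(t)}$) checks out, and the density/lower-semicontinuity passage to general $w\in H^{1,2}_0(\mathcal D,\rho)$ is handled correctly.

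One point deserves a comment. Your computation yields
\[
\|w\|_{L^{2^*(t)}}^2\;\le\;c_k^2\,\beta^2\,C_\rho^{\,1/t}\,\|\nabla w\|_{L^2(\mathcal D,\rho)}^2,
\]
i.e.\ a constant with $C_\rho$ to the \emph{positive} power $1/t$. This is the correct dependence (a larger bound on $\int\rho^{-t}$ should give a worse Sobolev constant), and it is what appears in \cite{murthy,Tru}. The exponent $-t$ on $C_\rho$ in the paper's display \eqref{Sobolev} is almost certainly a typo; but you should not then write ``this gives \eqref{Sobolev} with $C_{t,k}=c_k^2\beta^2$'', since with that choice of $C_{t,k}$ the displayed constant $C_{t,k}C_\rho^{-t}$ does not match what you derived. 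Flag the discrepancy explicitly rather than silently identifying two different constants. Apart from this labeling issue, the proof is correct and complete for $k\ge 2$; your deferral of $k=1$ to the cited references is reasonable, since the exponent $\beta$ degenerates there and a separate (elementary) one-dimensional argument is needed.
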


\subsection{Fermi coordinates for $\Omega$}
A key ingredient for the proof of Theorem \ref{th:wcpstrip} is the
choice of suitable local coordinates in order to parametrize the
normal tube $\mathscr{B}(M, \, \e)$; such coordinates are sometimes
called \emph{Fermi coordinates}, cf.  \cite[Chapter 2]{G04} and
\cite[Section 3]{PPS14}.

Up to refinements of the atlas  \eqref{eq:atlas-Gamma}, we can
assume that on each coordinate neighbourhood $U_\alpha \subset
\mathbb{R}^n$ is defined a moving orthonormal frame
$e^1_\alpha,\dots, e^{k}_\alpha$  for the normal bundle
$NM|_{U_{\alpha}}$. This means that, for all $p \in U_{\alpha}$, the
$k$ vectors
\begin{equation*}
  e^1_\alpha(p),\dots, e^{k}_\alpha(p) \in \mathbb{R}^n
 \end{equation*}
are an orthonormal basis for the normal space $N_p M$ and that, for
each $i$, the assignation $p \mapsto e^{i}_\alpha(p)$ defines a
smooth vector field. We take $\mathscr{B}(U_{\alpha}, \,
\varepsilon)$ as in \eqref{eq:B_A}, and we set
\begin{equation*}
\varphi_\alpha=\phi^{-1}_\alpha \colon V_{\alpha} \longrightarrow
U_{\alpha}, \quad E_\alpha^i =e^i_{\alpha} \circ \varphi_\alpha.
\end{equation*}
Thus we have a diffeomorphism
\begin{gather} \label{eq:fermi-coordinates}
  \Phi_\alpha \colon V_{\alpha} \times B_{\mathbb{R}^k}(0,\, \e) \longrightarrow
  \mathscr{B}(U_\alpha, \, \e) \\
  \big( (x^1_\alpha,\dots,x^{n-k}_\alpha),
  (y^1,\dots, y^k)\big) \mapsto \varphi_\alpha
  (x^1_\alpha,\dots,x^{n-k}_\alpha)+
  \sum_{i=1}^{k} y^i  E_\alpha^{i}
  (x^1_\alpha,\dots,x^{n-k}_\alpha),
\end{gather}
where we denoted by  $x^1_\alpha,\dots,x^{n-k}_\alpha$ the Euclidean
coordinates in the open set $V_{\alpha} \subseteq \mathbb{R}^{n-k}$
and by $y^1,\dots, y^k$ the Euclidean coordinates in the open ball
$B_{\mathbb{R}^k}(0, \, \e)$. This shows that
\begin{equation} \label{eq:atlas-fermi}
\left\{ \, \left( \mathscr{B}(U_{\alpha}, \, \varepsilon), \,
\Phi_{\alpha}^{-1} \right) \, \right\}_{\alpha}, \quad
\Phi_\alpha^{-1} \colon
  \mathscr{B}(U_\alpha, \, \e) \stackrel{\simeq}{\longrightarrow}
  V_{\alpha} \times B_{\mathbb{R}^k}(0,\, \e) \subset \mathbb{R}^{n-k}
  \times \mathbb{R}^k
\end{equation}
is an oriented atlas for $\mathscr{B}(M, \, \varepsilon)$.

Denoting by $z^1, \ldots, z^n$ the Euclidean coordinates in
$\mathbb{R}^n$, in the local chart $\mathscr{B}(U_{\alpha}, \, \e)$
we have the Euclidean metric
\begin{equation*}
  g = (dz^1)^2+\cdots + (dz^n)^2.
\end{equation*}
On the other hand,  on $V_\alpha$ we have the induced metric
\begin{equation*}
  h_{\alpha}^\prime = (i_{\alpha} \circ \varphi_{\alpha})^{*} g,
\end{equation*}
where $i_{\alpha} \colon U_{\alpha} \hookrightarrow \mathbb{R}^n$
denotes the embedding map as in Assumption $\mathrm{\ref{eq:h3}}$,
whereas on $B_{\mathbb{R}^k}(0, \, \e)$ we have the metric
$h^{\prime \prime}$ induced by the Euclidean metric on
$\mathbb{R}^k$, namely
\begin{equation*}
  h^{\prime \prime} = (dy^1)^2+\cdots + (dy^{k})^2.
\end{equation*}
The next result allows us to compare the two metrics
$\Phi_\alpha^*g$ and $h_{\alpha}=h_{\alpha}^{\prime}+h^{\prime
\prime}$ on $V_\alpha \times B_{\mathbb{R}^k}(0, \, \e)$, cf.
\cite[Lemma 3.1]{PPS14}.
\begin{lemma}  \label{lemma:metriccomp}
  With the notation introduced above, we have
  \begin{equation} \label{eq:two-metrics}
    \Phi_{\alpha}^*g=h_{\alpha} + \sum_{i=1}^{k}y^i
    (r_{\alpha}^i+2t_{\alpha}^i) + \sum_{i, \,j=1}^{k} y^i y^j
    s_{\alpha}^{ij}\ ,
  \end{equation}
  where $r_{\alpha}^i$, $t_{\alpha}^i$ and $s_{\alpha}^{ij}$ are $2$-tensors
  on $V_{\alpha}
  \times B_{\mathbb{R}^k}(0, \, \e) $ whose coefficients are smooth functions
  on $V_{\alpha}$.
\end{lemma}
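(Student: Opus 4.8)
The plan is to compute $\Phi_\alpha^* g$ directly from the explicit expression \eqref{eq:fermi-coordinates} and then to sort the result by powers of the normal coordinates $y^1,\dots,y^k$. Abbreviate $F_\alpha := i_\alpha\circ\varphi_\alpha \colon V_\alpha\to\mathbb{R}^n$, so that by definition $h_\alpha' = F_\alpha^* g$ and $\Phi_\alpha(x,y)=F_\alpha(x)+\sum_{i=1}^k y^i E_\alpha^i(x)$. Since $\Phi_\alpha$ is a polynomial of degree one in $y$ with coefficients smooth in $x$, its partial derivatives are
\begin{equation*}
\partial_{x^a}\Phi_\alpha = \partial_{x^a}F_\alpha + \sum_{i=1}^k y^i\,\partial_{x^a}E_\alpha^i, \qquad \partial_{y^i}\Phi_\alpha = E_\alpha^i,
\end{equation*}
for $a=1,\dots,n-k$ and $i=1,\dots,k$. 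The components of $\Phi_\alpha^*g$ are the Euclidean scalar products in $\mathbb{R}^n$ of these vector fields, hence again polynomials of degree at most two in $y$ whose coefficients are smooth functions on $V_\alpha$ alone; this already shows that the ``error'' tensors appearing in \eqref{eq:two-metrics} may be taken with coefficients that do not depend on $y$.

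Next I would identify the $y$-independent part of $\Phi_\alpha^* g$ with $h_\alpha = h_\alpha' + h''$. Setting $y=0$: the tangential block is $\partial_{x^a}F_\alpha\cdot\partial_{x^b}F_\alpha = (F_\alpha^* g)_{ab} = (h_\alpha')_{ab}$; the normal block is $E_\alpha^i\cdot E_\alpha^j = \delta_{ij} = (h'')_{ij}$, since $E_\alpha^1,\dots,E_\alpha^k$ is an orthonormal frame by construction; and the mixed block vanishes, because $\partial_{x^a}F_\alpha$ is tangent to $M$ while $E_\alpha^i$ is normal to $M$, so that $\partial_{x^a}F_\alpha\cdot E_\alpha^i = 0$ — which is precisely where $h_\alpha$ vanishes too. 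Reading off the remaining terms, the part of $\Phi_\alpha^* g$ linear in $y$ equals $\sum_{i=1}^k y^i$ times the symmetric $2$-tensor whose tangential block is $\partial_{x^a}F_\alpha\cdot\partial_{x^b}E_\alpha^i + \partial_{x^a}E_\alpha^i\cdot\partial_{x^b}F_\alpha$ and whose mixed block is assembled from the normal-connection coefficients $\partial_{x^a}E_\alpha^i\cdot E_\alpha^j$; differentiating in $x^a$ the identities $\partial_{x^b}F_\alpha\cdot E_\alpha^i\equiv 0$ and $E_\alpha^i\cdot E_\alpha^j\equiv\delta_{ij}$ shows that the tangential block equals $-2\,(\partial_{x^a}\partial_{x^b}F_\alpha\cdot E_\alpha^i)$ and the mixed coefficients are antisymmetric in $i,j$. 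This is the source of the factor $2$, and it lets one split the linear part as $\sum_i y^i(r_\alpha^i + 2t_\alpha^i)$ with $t_\alpha^i$ carrying the (symmetric, tangential) second-fundamental-form block and $r_\alpha^i$ the mixed one, matching the notation of \cite[Lemma 3.1]{PPS14}. Finally, the part quadratic in $y$ is $\sum_{i,j=1}^k y^i y^j\,s_\alpha^{ij}$ with $s_\alpha^{ij}$ the $2$-tensor whose only nonzero block is the tangential one, with coefficients $\partial_{x^a}E_\alpha^i\cdot\partial_{x^b}E_\alpha^j$; all of $r_\alpha^i$, $t_\alpha^i$, $s_\alpha^{ij}$ are manifestly smooth on $V_\alpha$, which is the claim.

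The computation is entirely elementary, and I do not expect a genuine obstacle; the only points deserving a little care are (i) the vanishing of the $y$-independent mixed block, which is exactly where the normality of the frame $\{E_\alpha^i\}$ to $M$ is used and guarantees that $\Phi_\alpha^* g$ restricts to $h_\alpha$ along the zero section $\{y=0\}$, and (ii) verifying that no $y$-dependence is hidden in the coefficients of $r_\alpha^i$, $t_\alpha^i$, $s_\alpha^{ij}$, which is immediate since these arise by differentiating a function that is polynomial in $y$ with $x$-smooth coefficients. The uniform, $\alpha$-independent bounds on these coefficients that will be needed later are not part of the present statement and would be obtained separately from Assumption~\ref{eq:h3}.
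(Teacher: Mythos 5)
Your proposal is correct and follows essentially the same route as the paper: compute the partial derivatives of $\Phi_\alpha$, take Euclidean scalar products, observe that the result is a polynomial of degree at most two in $y$ with $x$-smooth coefficients, and identify the $y$-independent part with $h_\alpha$ using $\partial_{x^a}F_\alpha\cdot E^i_\alpha=0$ and $E^i_\alpha\cdot E^j_\alpha=\delta_{ij}$. The only (immaterial) discrepancy is notational: in the paper's definition \eqref{eq:r-t-s} the tensor $r^i_\alpha$ is the tangential block and $t^i_\alpha$ the mixed one, with the factor $2$ accounting for the symmetry of the off-diagonal $dx^a\,dy^j$ entries, whereas you assign the labels the other way around and source the $2$ from the second-fundamental-form identity — since the lemma only asserts existence of such tensors, this does not affect correctness.
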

\begin{proof}
For simplicity of notation, let us temporarily drop the subscript
$\alpha$. Given any pair $X$, $Y$ of vector fields in $V \times
B_{\mathbb{R}^k}(0, \, \e)$ we have
\begin{equation*}
  (\Phi^*g)(X, \, Y)=g(d\Phi(X),\, d\Phi(Y))=d\Phi(X) \cdot d\Phi(Y),
  \end{equation*}
  where the dot stands for the Euclidean scalar product. Then, in order to
  determine the coefficients
  of $\Phi^*g$ with respect to the frame
  \begin{equation*}
    {\partial}_{x^1},\dots, {\partial}_{ x^{n-k}}, \,
    {\partial}_{y^1},\dots,{\partial}_{y^k}
  \end{equation*}
 of $T V \times TB_{\mathbb{R}^k}(0, \, \e)$, we must compute the quantities
\begin{equation*}
  \begin{split}
    (\Phi^*g)({\partial}_{y^i}, \, {\partial}_{y^j})& =
     d\Phi({\partial}_{y^i}) \cdot  d\Phi({\partial}_{y^j}) =
     {\partial}_{y^i}\Phi \cdot {\partial}_{y^j}\Phi, \\
    (\Phi^*g)({\partial}_{x^a}, \, {\partial}_{y^j})& =
     d\Phi({\partial}_{x^a}) \cdot  d\Phi({\partial}_{y^j}) =
     {\partial}_{x^a}\Phi \cdot {\partial}_{y^j}\Phi, \\
     (\Phi^*g)({\partial}_{x^a}, \, {\partial}_{x^b})& =
     d\Phi({\partial}_{x^a}) \cdot  d\Phi({\partial}_{x^b}) =
     {\partial}_{x^a}\Phi \cdot {\partial}_{x^b}\Phi.
  \end{split}
\end{equation*}
  Observing that $\partial_{x^a} \varphi$ is a tangent vector field
  on $V$, whereas $E^i$ is a normal vector field, we obtain $\partial_{x^a}
  \varphi \cdot E^i=0$.  Moreover, the fact that $e^1, \ldots, e^k$ is an
  orthonormal frame implies $E^i \cdot E^j=\delta_{ij}$.
  Therefore we get
   \begin{equation*}
  \begin{split}
    {\partial}_{y^i}\Phi \cdot {\partial}_{y^j}\Phi & = E^i\cdot E^j =
    \delta_{ij}= h^{\prime \prime}_{ij},\\
     {\partial}_{x^a}\Phi \cdot {\partial}_{y^j}\Phi&  = \sum_{i=1}^{k} y^i
    \partial_{x^a} E^i\cdot E^j
  \end{split}
 \end{equation*}
  and
  \begin{align*}
    {\partial}_{x^a}\Phi \cdot {\partial}_{x^b}\Phi &=
    \partial_{x^a} \varphi \cdot \partial_{x^b}
   \varphi  + \sum_{i=1}^{k} y^i \big(\partial_{x^a} \varphi \cdot
   \partial_{x^b} E^i +
    \partial_{x^b} \varphi \cdot \partial_{x^a} E^i \big)+
    \sum_{i, \, j=1}^{k} y^i y^j \partial_{x^a} E^i
    \cdot
    \partial_{x^b} E^j\\
    & = h^\prime_{ab} + \sum_{i=1}^{k} y^i \big(\partial_{x^a} \varphi
    \cdot \partial_{x^b} E^i +
    \partial_{x^b} \varphi \cdot \partial_{x^a} E^i \big)+
    \sum_{i, \, j=1}^{k} y^i y^j \partial_{x^a} E^i
    \cdot
    \partial_{x^b} E^j.
  \end{align*}
  Now, setting
  \begin{equation} \label{eq:r-t-s}
  \begin{split}
    r^i & = \sum_{a, \, b=1}^{n-k}\big(\partial_{x^a} \varphi \cdot
    \partial_{x^b} E^i +
    \partial_{x^b} \varphi \cdot \partial_{x^a} E^i \big) dx^a dx^b, \\
   t^i & = \sum_{a=1}^{n-k} \sum_{j=1}^{k}\big(
    \partial_{x^a} E^i \cdot E^j \big)dx^a dy^j, \\
    s^{ij} & = \sum_{a, \, b=1}^{n-k} \big( \partial_{x^a} E^i
    \cdot
    \partial_{x^b} E^j \big)dx^a dx^b,  \\
    \end{split}
 \end{equation}
  we obtain \eqref{eq:two-metrics}.
\end{proof}

\begin{example} \label{ex:curves-surfaces}
Let us explicitly compute the expression \eqref{eq:two-metrics} in
three simple cases, namely when $M$ is either a plane curve, or a
space curve or a surface in $\mathbb{R}^3$.
\\ \\
$\boldsymbol{(i)}$ Let $M \subset \mathbb{R}^2$ be a plane curve.
Then $n=2$, $k=1$ and the function $\Phi$ can be chosen of the form
\begin{equation*}
\Phi(x, \, y) = \varphi(x) + y N(x),
\end{equation*}
where $\varphi$ is the arc-length parametrization and $N$ is the
principal normal vector, see \cite[Chapter 1]{dC76}. Then
\begin{equation*}
\partial_x \Phi = \partial_x \varphi + y \partial_x E = (1- \kappa y) T, \quad \partial_y \Phi = N,
\end{equation*}
where $T=T(x)$ is the unit tangent vector and $\kappa=\kappa(x)$ is
the curvature.   From this we get
\begin{equation}  \label{eq:plane-curve}
\begin{split}
\Phi^* g & = (dx)^2 + (dy)^2 - 2y \kappa (dx)^2 + y^2 \kappa^2 (dx)^2 \\
& = h - 2y \kappa (dx)^2 + y^2 \kappa^2 (dx)^2.
\end{split}
\end{equation}
\\
$\boldsymbol{(ii)}$ Let $M \subset \mathbb{R}^3$ be a space curve.
Then $n=3$, $k=2$ and the function $\Phi$ can be chosen of the form
\begin{equation*}
\Phi(x, \, y^1, \, y^2) = \varphi(x)+y^1 E^1(x)+y^2E^2(x),
\end{equation*}
where $\varphi$ is the arc-length parametrization, $E^1$ is the
principal normal vector and $E^2$ is the binormal vector. Thus
$\partial_x \varphi= T$, where $T=T(x)$ is the unit tangent vector,
and by the Fr\'enet formulas \cite[p. 19]{dC76} we obtain
\begin{equation*}
\partial_x E^1 = -\kappa T-\tau E^2, \quad \partial_x E^2 = \tau E^1,
\end{equation*}
 where $\kappa=\kappa(x)$ is the curvature and $\tau=\tau(x)$ is the torsion. Now, using \eqref{eq:r-t-s}, we easily find
\begin{align*}
r^1 & = -2 \kappa (dx)^2, & t^1& = - \tau dx dy^2,  & r^2 &= 0, &
t^2 & = \tau
dx dy^1, \\
s^{11}& = (\kappa^2 + \tau^2) (dx)^2, & s^{12}&=0, & s^{22} & =
\tau^2 (dx)^2,
\end{align*}
so we get
\begin{equation} \label{eq:space-curve}
\Phi^*g = h -2 y^1 ( \kappa (dx)^2 + \tau dx dy^2) + 2 y^2 ( \tau dx
dy^1) + (y^1)^2 ((\kappa^2 + \tau^2) (dx)^2) + (y^2)^2 (\tau^2
(dx)^2).
\end{equation}
\\
$\boldsymbol{(iii)}$ Let $M \subset \mathbb{R}^3$ be a surface. Then
$n=3$, $k=1$ and the function $\Phi$ can be chosen of the form
\begin{equation*}
\Phi(x^1, \, x^2, \, y) = \varphi(x^1, \, x^2) + yE(x^1, \, x^2),
\end{equation*}
where we take $E(x^1, \, x^2) = \frac{ \partial_{x^1} \varphi \wedge
\partial_{x^2} \varphi}{ \|  \partial_{x^1} \varphi \wedge \partial_{x^2}
\varphi \|}$. Denoting as customary by
\begin{equation*}
\mathsf{E}=\partial _{x^1} \varphi \cdot \partial _{x^1} \varphi,
\quad \mathsf{F}= \partial _{x^1} \varphi \cdot \partial _{x^2}
\varphi, \quad \mathsf{G}=\partial _{x^2} \varphi \cdot \partial
_{x^2} \varphi
\end{equation*}
the coefficients of the first fundamental form $\mathrm{I}_M$ of $M$
and by
\begin{equation*}
\mathsf{e}=-\partial_{x^1} E \cdot \partial _{x^1} \varphi, \quad
\mathsf{f}= - \partial _{x^2} E \cdot \partial _{x^1} \varphi, \quad
\mathsf{g}=-\partial _{x^2} E \cdot \partial _{x^2} \varphi
\end{equation*}
the coefficients of the second fundamental form $\mathrm{II}_M$,
from the formulas in \cite[p. 92 and p. 154]{dC76} we deduce
\begin{align*} \label{eq:space-surfaces}
 \partial_{x^1} \Phi \cdot \partial_{x^1} \Phi & = \mathsf{E} -2 y \mathsf{e}
 + y^2\psi_{11}, &
 \partial_{x^1} \Phi \cdot \partial_{x^2} \Phi & = \mathsf{F} -2 y \mathsf{f}
 + y^2\psi_{12}, &
 \partial_{x^2} \Phi \cdot \partial_{x^2} \Phi & = \mathsf{G} -2 y \mathsf{g}
 + y^2\psi_{22}, \\
 \partial_{x^1} \Phi \cdot \partial_{y} \Phi& =0 , &
 \partial_{x^2} \Phi \cdot \partial_{y} \Phi& =0 , &
 \partial_{y} \Phi \cdot \partial_{y} \Phi& =1,
\end{align*}
where
\begin{equation*}
\psi_{11} = \frac{\mathsf{e}^2 \mathsf{G} - 2 \mathsf{e} \mathsf{f}
\mathsf{F}+ \mathsf{f}^2 \mathsf{E}}{\mathsf{E} \mathsf{G}-
\mathsf{F}^2}, \quad \psi_{12} = \frac{-\mathsf{f}^2 \mathsf{F}+
\mathsf{e} \mathsf{f} \mathsf{G}- \mathsf{e} \mathsf{g} \mathsf{F}+
\mathsf{g} \mathsf{f} \mathsf{E}}{\mathsf{E} \mathsf{G}-
\mathsf{F}^2}, \quad \psi_{22} = \frac{\mathsf{f}^2 \mathsf{G} - 2
\mathsf{f} \mathsf{g} \mathsf{F}+ \mathsf{g}^2
\mathsf{E}}{\mathsf{E} \mathsf{G}- \mathsf{F}^2}.
\end{equation*}
Then, using the expressions of the mean curvature $H_M$ and the
Gaussian curvature $K_M$ of $M$ in terms of the coefficients of the
first two fundamental forms (cf. \cite[p. 155-156]{dC76}), we can
rewrite \eqref{eq:two-metrics} as
\begin{equation} \label{eq:two-metric-surfaces}
\Phi^*g = \mathrm{I}_M + (dy)^2 -2 y \mathrm{II}_M + y^2
\mathrm{III}_M,
\end{equation}
where $\mathrm{I}_M+(dy)^2=h$ and $\mathrm{III}_M = -K_M
\mathrm{I}_M+ 2H_M \mathrm{II}_M$ is the so-called \emph{third
fundamental form} of $M$, see \cite[p. 98]{St89}.
\end{example}

\medskip

As a consequence of Lemma \ref{lemma:metriccomp} we have the result
below, comparing the volume forms of the two metrics
$\Phi_\alpha^*g$ and $h_\alpha$ on $V_\alpha\times B_{
\mathbb{R}^k}(0, \, \e)$.

\begin{corollary} \label{cor:dvol}
  With the notation introduced above, we have
  \begin{equation} \label{eq:two-dvol}
    \mathrm{dvol}_{\Phi_\alpha^*g} = \lambda_\alpha\ \mathrm{dvol}_{h_\alpha}
  \end{equation}
  for a smooth function $\lambda$ on $\mathscr{B}(M, \, \varepsilon)$ such that
  $\lambda \equiv 1$ on $M$. If moreover $\varepsilon <1$, for every
  \begin{equation*}
    (x_\alpha, \, y)= \left( (x_\alpha^1,\ldots, x_\alpha^{n-k}), \,
    (y^1,\dots,y^k) \right)\in
    V_{\alpha }\times B_{\mathbb{R}^k}(0, \, \varepsilon)
  \end{equation*}
  we have
  \begin{equation}
    \vert \lambda_\alpha(x_\alpha,y) -1 \vert \le K_1 \norm{y},
    \label{eq:estimate}
  \end{equation}
  where $K_1$ is a positive constant, independent on $\alpha$.

 It follows that there exists $\varepsilon_1=\min \bigg\{1, \,
   \dfrac{1}{2K_1} \bigg\}$,
  independent on $\alpha$, such that if
  $\varepsilon < \varepsilon_1$ then
\begin{equation} \label{eq:varepsilon_0}
\frac{1}{2} \leq \lambda_\alpha(x_\alpha, \, y) \leq \frac{3}{2}
\quad \textrm{for all} \;\; (x_\alpha, \, y) \in V_{\alpha} \times
B_{\mathbb{R}^k}(0, \, \varepsilon).
\end{equation}
 \end{corollary}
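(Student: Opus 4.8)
The plan is to realise $\lambda_\alpha$ as the square root of a ratio of determinants, check that it is globally well defined, and then control it near the zero section $\{y=0\}$ by a first–order Taylor expansion in the fibre variable $y$.

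First I would recall that two Riemannian metrics written, in a common chart, as positive–definite matrices $G_1,G_2$ satisfy $\mathrm{dvol}_{G_1}=\sqrt{\det G_1/\det G_2}\;\mathrm{dvol}_{G_2}$. Applying this in the Fermi chart \eqref{eq:atlas-fermi} with $G_1=\Phi_\alpha^*g$ and $G_2=h_\alpha=h'_\alpha+h''$, I would \emph{define} $\lambda_\alpha:=\sqrt{\det(\Phi_\alpha^*g)/\det h_\alpha}$; this is a smooth, everywhere positive function on $V_\alpha\times B_{\mathbb{R}^k}(0,\varepsilon)$ because $\Phi_\alpha^*g$ and $h_\alpha$ are genuine, nowhere–degenerate smooth metrics. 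Next I would check invariance of this ratio under the transition maps of \eqref{eq:atlas-fermi}: a transition is a reparametrisation of the base composed with an $x$–dependent rotation of the orthonormal normal frame, and although the latter introduces $dx^a$–terms in the $dy^j$, these die after wedging with $dx^1\wedge\cdots\wedge dx^{n-k}$, so $\mathrm{dvol}_{h_\alpha}$ — like $\mathrm{dvol}_{\Phi_\alpha^*g}=|dz^1\wedge\cdots\wedge dz^n|$ — patches to a global density on $\mathscr B(M,\varepsilon)$. Hence the $\lambda_\alpha$ glue to a single smooth $\lambda$, which proves \eqref{eq:two-dvol}; and $\lambda\equiv1$ on $M$ is immediate from Lemma \ref{lemma:metriccomp}, since the correction $\Phi_\alpha^*g-h_\alpha=\sum_i y^i(r^i_\alpha+2t^i_\alpha)+\sum_{i,j}y^iy^j s^{ij}_\alpha$ vanishes identically on $\{y=0\}$.

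For \eqref{eq:estimate} I would exploit the block structure: $h_\alpha$ is block diagonal, with blocks $h'_\alpha$ (in the $x$–variables) and $I_k$ (in the $y$–variables), and by \eqref{eq:r-t-s} the perturbation $P_\alpha:=\Phi_\alpha^*g-h_\alpha$ has no $dy\,dy$–part. A Schur–complement computation then gives $\lambda_\alpha^2=\det\big(I_{n-k}+(h'_\alpha)^{-1}Q_\alpha\big)$, where $Q_\alpha$ is the $xx$–block of $P_\alpha$ minus its $xy$–block times the transpose; both summands vanish at $y=0$, so $(h'_\alpha)^{-1}Q_\alpha=O(\|y\|)$, and since $\det(I+A)=1+O(\|A\|)$ for small $\|A\|$, one gets $|\lambda_\alpha(x_\alpha,y)-1|\le K_1\|y\|$ once $\|y\|<\varepsilon<1$ (the quadratic terms being absorbed by $\|y\|<1$). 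The step I expect to be the main obstacle is showing that $K_1$ can be taken \emph{independent of $\alpha$}: a crude bound on $(h'_\alpha)^{-1}Q_\alpha$ involves $(h'_\alpha)^{-1}$, which need not be uniformly bounded in the dilated charts that the construction in Assumption \ref{eq:h3} allows. The way around this is to read off from \eqref{eq:r-t-s} that $(h'_\alpha)^{-1}(P_\alpha)_{xx}$ equals, modulo terms quadratic in $y$, the combination $-2\sum_i y^i S^{e^i_\alpha}$ of Weingarten maps of $M$, and that the rest of $(h'_\alpha)^{-1}Q_\alpha$ is built only from these shape operators and from the connection $1$–form of the normal bundle in the chart — \emph{scale–invariant}, curvature–type objects unaffected by the dilations $\phi_\alpha\mapsto A_\alpha\phi_\alpha$. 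Since the principal curvatures of $M$ form a finite geometric invariant (finiteness being guaranteed by the Main Assumption) and the normal–connection forms can be made uniformly small after a preliminary refinement of the atlas compatible with Assumption \ref{eq:h3}, a uniform $K_1$ exists.

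Granting \eqref{eq:estimate}, the last assertion is elementary: since $\|y\|<\varepsilon$, taking $\varepsilon<\varepsilon_1:=\min\{1,\,1/(2K_1)\}$ forces $|\lambda_\alpha(x_\alpha,y)-1|<1/2$, which is exactly \eqref{eq:varepsilon_0}, and $\varepsilon_1$ depends only on $K_1$, hence not on $\alpha$.
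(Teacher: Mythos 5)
Your construction of $\lambda_\alpha$ is the paper's: both arguments set $\lambda_\alpha=\sqrt{\det(\Phi_\alpha^*g)/\det h_\alpha}$ and read $\lambda\equiv 1$ on $M$ off Lemma \ref{lemma:metriccomp}. Where you genuinely diverge is in the proof of \eqref{eq:estimate}. The paper expands $\det\Phi_\alpha^*g$ as a polynomial in the fibre variables $y^i$, uses $|y^i|<1$ to collapse everything into $\det h_\alpha+\sum_i\widetilde K_1^i(x,y)\,y^i$ with $|\widetilde K_1^i|\le\widetilde K_1$, obtains the uniformity in $\alpha$ of $\widetilde K_1$ and of $\det h_\alpha$ directly from Assumption \ref{eq:h3} (all coefficients are built from first and second derivatives of $\varphi_\alpha$ and $E^i_\alpha$), and concludes via $|\sqrt{1+x}-1|\le|x|$. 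Your Schur--complement identity $\lambda_\alpha^2=\det\bigl(I_{n-k}+(h'_\alpha)^{-1}Q_\alpha\bigr)$ is correct (the $dy\,dy$-block of $\Phi_\alpha^*g$ is exactly $I_k$ by \eqref{eq:r-t-s}) and buys a transparent geometric reading, e.g.\ $(h'_\alpha)^{-1}r^i_\alpha=-2S^{e^i_\alpha}$ in terms of Weingarten maps; but it makes the uniformity question look harder than it is. Two points there. First, the feared non-uniformity of $(h'_\alpha)^{-1}$ is illusory: under the dilations $\phi_\alpha=A_\alpha\phi'_\alpha$ the blocks $P_{xx}$ and $P_{xy}P_{xy}^{T}$ scale exactly as $h'_\alpha$ does, so $(h'_\alpha)^{-1}Q_\alpha$ is controlled by the same uniform bounds of Assumption \ref{eq:h3} that the paper uses, and the detour through the Main Assumption and principal curvatures (which would moreover tie $K_1$ to the initially given tube radius) is unnecessary. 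Second, your claim that the normal-connection forms ``can be made uniformly small'' by refining the atlas is not achievable in general (normal curvature obstructs flattening the connection over a whole chart) --- but it is also not needed: those forms enter $Q_\alpha$ only through $P_{xy}P_{xy}^{T}$ and the normal part of $s^{ij}_\alpha$, i.e.\ in terms quadratic in $y$, which are absorbed into $K_1\norm{y}$ using $\norm{y}<1$; a uniform \emph{bound}, which is part of the standing hypotheses on the frames $E^i_\alpha$, suffices. With these adjustments your argument is sound, and the passage from \eqref{eq:estimate} to \eqref{eq:varepsilon_0} is identical to the paper's.
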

\begin{proof}
  In order to simplify the notation,
 we drop the subscript $\alpha$. Since $\det h \neq 0$
everywhere, by Lemma \ref{lemma:metriccomp} we can
  write the following finite expansion of $\det \Phi^* g$ in terms of the
  normal coordinates $y^i$:
  \begin{equation} \label{eq:expansion}
    \det \Phi^* g = \det h + \sum_{i=1}^k y^i A^i + \sum_{i, \, j=1}^k
    y^{i}y^{j}A^{ij} +  \sum_{i, \, j, \, l \ =1}^k y^{i}y^{j}y^{l}A^{ijl}
    + \ldots
  \end{equation}
  where all the coefficients are smooth functions depending on the first and second
  derivatives of $\Phi$. Now $\varepsilon <1$  implies $\norm{y} <1$ and
  so $|y^i| <1$ for all $i$, thus from \eqref{eq:expansion}
  we obtain
  \begin{equation} \label{eq:det-2}
    \det \Phi^* g  =  \det h +  \sum_{i=1}^k \widetilde{K}^{i}_1(x,y) y^i
    % \leq |\det h|
    % + k \widetilde{K}_1 \norm{y},
  \end{equation}
  where $\vert \widetilde{K}^{i}_1(x,y) \vert \le \widetilde{K}_1$ and
  $\widetilde{K}_1$ is a positive constant, which is moreover independent on the chart
  because of Assumption  $\mathrm{\ref{eq:h3}}$. Setting $\lambda(x, \, y) :=\det \Phi^* g \cdot
      \det h ^{-1}$, from \eqref{eq:det-2} we get \footnote{Here we use the
    inequality $ \vert \sqrt{1+x} -1 \vert \le \vert x \vert$ for
  $x\ge -1$.}
  \begin{equation*}
    \begin{split}
    \vert \lambda(x, \, y) -1 \vert & = \vert \sqrt{ \det \Phi^* g \cdot
      \det h ^{-1}} -1 \vert = \left\vert \sqrt{1 + \sum_{i=1}^k
      \dfrac{\widetilde{K}^{i}_1(x,y)}{\det h} y^i} -1 \right\vert  \\
      & \leq \left\vert \sum_{i=1}^k
      \dfrac{\widetilde{K}^{i}_1(x,y)}{\det h} y^i \right\vert \le
      \sum_{i=1}^k  \left\vert \dfrac{\widetilde{K}^{i}_1(x,y)}{\det h}
      \right\vert \vert y^i \vert \leq \sum_{i=1}^k
      \dfrac{\widetilde{K}_1}{\det h} \vert y^i \vert \le k
      \dfrac{\widetilde{K}_1}{\det h} \norm{y} = K_1 \norm{y}\ ,
    \end{split}
  \end{equation*}
  where we put $K_1:= k\widetilde{K}_1/\det h$. Again by Assumption $\mathrm{\ref{eq:h3}}$, the function $\det h$ is bounded, uniformly with respect to $\alpha$, so the
  positive constant
  $K_1$ is also independent on the chart. The proof of the last statement
  is now straightforward.
\end{proof}

\begin{remark} \label{rmk:dvol-h}
Using the same arguments as in the proof of Corollary
\ref{cor:dvol}, we can show that on $V_{\alpha}$ we have
\begin{equation} \label{eq:dvol-h}
  \mathrm{dvol}_{h_\alpha^{'}} = \mu_\alpha(x_\alpha)\, dx_\alpha
\end{equation}
for a smooth positive function $\mu_\alpha(x_\alpha)$, uniformly
bounded with respect to $\alpha$.
\end{remark}

\section{The weak comparison principle: Proof of Theorem
    \ref{th:wcpstrip}} \label{section:proof}

    \subsection{Test functions} \label{subsection:testfunctions}
  For the sake of brevity, we write $B_R$ instead of $B_{M}(\bar{p}, \,
  R)$, see Assumption $\mathrm{\ref{eq:h4}}$.
    Let $\varphi_R\in C^{\infty}_c
    (M) $ be such that
    \begin{equation}\label{Eq:Cut-off1}
      \begin{cases}
    0 \leq \varphi_R \leq 1 & \text{ in } M, \\
    \varphi_R \equiv 1 & \text{ in } B_R,\\
    \varphi_R \equiv 0 & \text{ in } M \setminus
    B_{2R},\\
    |\nabla \varphi_R | \leq \frac 2R & \text{ in }  B_{2R} \setminus B_{R}.
      \end{cases}
    \end{equation}
    For instance we could take $\varphi_R$ of the form
    \begin{equation*}
    \varphi_R(p) =
    \begin{cases}
       1 & \text{ in } B_{R}, \\
    \left( \frac{2R-d_M(\bar p , \, p)}{R}\right)^2    & \text{ in } B_{2R}
    \setminus B_{R}, \\
       0 & \text{ in } M\setminus B_{2R},
     \end{cases}
    \end{equation*}
    in fact the distance function $d_M(\bar p , \, p)$ has
    distributional gradient whose $L^\infty$-norm is at most $1$, see
    \cite[Theorem 11.3 p. 296]{Gr09}.
    Since $\mathscr{B}(M, \,
    \varepsilon)$ is locally trivial, in view of diffeomorphisms
    in \eqref{eq:fermi-coordinates},
    we can extend $\varphi_R$ to the whole tubular
    neighbourhood and
    we continue to denote this extension by
    $\varphi_R$.
    In what follows we will consider the bounded domain
\begin{equation*}
\Omega_R:= \Omega\cap  \mathscr{B}(B_{R}, \, \e),
\end{equation*}
see Figure \ref{fig:omega}  below.
\begin{center}
\begin{tikzpicture}[line cap=round,line join=round,>=triangle 45,x=0.6 cm,y=0.5cm]
\clip(-4.1066640247289445,-6.069139397543109) rectangle
(16.002791730848084,6.949871217548806); \fill[line
width=1.2pt,color=qqwwtt,fill=qqwwtt,pattern=north east
lines,pattern color=qqwwtt] (-1.9659402406656563,3) --
(-1.9659402406656563,-3) -- (4.971184070787558,-3) -- (5,3) --
cycle;\fill[line width=0.4pt,color=ttzzqq,fill=ttzzqq,fill
opacity=0.47] (4.971184070787558,-3) -- (-1.9659402406656563,-3) --
(-3,-3) -- (-12.716254027339456,-3) -- (-12.648591438653476,3) --
(-3,3) -- (-1.9659402406656563,3) -- (1.5419165513808526,3) -- (5,3)
-- (5.25880771850801,2.9853017354668414) --
(5.551692703766248,2.9793244908697343) --
(5.79675973224763,2.967370001675521) --
(6.021742944519114,2.9453672328575173) --
(6.264387610015966,2.912279323926129) --
(6.496002972535688,2.8791914149947404) --
(6.716589032078281,2.8461035060633515) --
(6.959233697575133,2.8240449001090924) --
(7.179819757117726,2.8019862941548337) --
(7.3562886047518,2.790956991177704) --
(7.521498410640398,2.7894793034349874) --
(7.709226300019949,2.7799276882005746) --
(7.874665844676893,2.768898385223445) --
(8.062163995288097,2.7578690822463154) --
(8.238632842922172,2.7358104762920563) --
(8.448189599487634,2.713751870337797) --
(8.635687750098839,2.691693264383538) --
(8.823185900710042,2.6806639614064087) --
(9.010684051321247,2.6696346584292794) --
(9.20921150490958,2.6586053554521496) --
(9.385680352543654,2.6475760524750203) --
(9.562149200177728,2.6365467494978905) --
(9.749647350788932,2.614488143543632) --
(9.926116198423006,2.5924295375893727) --
(10.11361434903421,2.5703709316351135) --
(10.290083196668284,2.537283022703725) --
(10.455522741325229,2.482136507818077) --
(10.609932983005045,2.4269899929324295) --
(10.75331392170773,2.3828727810239116) --
(10.885665557433285,2.3166969631611343) --
(11.006987890181712,2.2505211452983573) --
(11.150368828884396,2.162286721481321) --
(11.293749767587082,2.074052297664285) --
(11.426101403312638,1.996847176824378) --
(11.558453039038193,1.8865541470530829) --
(11.690804674763749,1.7983197232360466) --
(11.801097704535046,1.72111460239614) --
(11.91798412170466,1.6157112478213589) --
(12.043742370031898,1.5005285428535495) --
(12.151925795794947,1.3997650871226337) --
(12.231240520643102,1.279942483310959) --
(12.29741633850588,1.1255322416311455) --
(12.356297724469144,0.9639929873863685) --
(12.385867469885234,0.7519266050264581) --
(12.407709368277176,0.5961256987289283) --
(12.403862983276795,0.44600287736993083) --
(12.403862983276795,0.23005671667120567) -- (12.386402515436012,0)
-- (12.352562853391527,-0.19798411562439758) --
(12.28638703552875,-0.42959947814411764) --
(12.209181914688843,-0.628126931732449) --
(12.115934769011826,-0.8136830600392994) --
(11.966556679685327,-1.0335986504353005) --
(11.861207405760402,-1.1865405414645167) --
(11.668561199182376,-1.3861791237433008) --
(11.454710297252582,-1.565684025640039) --
(11.180168072650675,-1.7494497564004416) --
(10.911863936927713,-1.8530579117650317) --
(10.630137027512346,-1.9108451871223033) --
(10.366737813184793,-2.003751930445969) --
(10.190414836114842,-2.10936002423165) --
(10.053127728032026,-2.2686237785800416) --
(9.920482135241434,-2.4512747786712987) --
(9.786734589257037,-2.5898700183802172) --
(9.659584142640462,-2.7254971614378976) --
(9.506636131403747,-2.82703731957157) --
(9.328992981437361,-2.9204611795833135) --
(9.134028963291943,-2.996751447553259) --
(8.87972807005879,-3.047611626199889) --
(8.642800684440632,-3.0451267205927928) --
(8.3965563729158,-3.039134929758784) --
(8.142255479682646,-3.039134929758784) --
(7.845571104243967,-3.039134929758784) --
(7.531933335923077,-3.039134929758784) --
(7.218295567602188,-3.0306582333176793) --
(6.8769689120383815,-3.0175308822286664) --
(6.459162246016318,-3.002190553042354) --
(6.073941548052997,-3.0052281439943638) --
(5.743350386849897,-2.996751447553259) --
(5.3873291363234825,-2.996751447553259) -- cycle;\draw [line
width=2pt,color=ffqqqq,domain=-4.1066640247289445:16.002791730848084]
plot(\x,{(-0-0*\x)/1});\draw [line width=1.2pt,
color=wrwrwr,domain=-4.1066640247289445:16.002791730848084]
plot(\x,{(--5-0*\x)/1});\draw [line
width=1.2pt,color=wrwrwr,domain=-4.1066640247289445:16.002791730848084]
plot(\x,{(-5-0*\x)/1});\draw [line
width=1.2pt,color=ttzzqq,domain=-4.1066640247289445:-2]
plot(\x,{(-3-0*\x)/-1});\draw [line
width=1.2pt,color=ttzzqq,domain=-4.1066640247289445:-2]
plot(\x,{(--3-0*\x)/-1});\draw [line width=1pt, dashed] (14, 5)--
(14,0);\draw (10.096117407356337,-3.803089715137713)
node[anchor=north west] {$\mathcal{B}(M, \, \varepsilon)$};\draw
[line width=1.2pt,color=qqwwtt] (-1.9659402406656563,3)--
(-1.9659402406656563,-3);\draw [line width=1.2pt,color=qqwwtt]
(-1.9659402406656563,-3)-- (4.971184070787558,-3);\draw [line
width=1.2pt,color=qqwwtt] (4.971184070787558,-3)-- (5,3);\draw [line
width=1.2pt,color=qqwwtt] (5,3)-- (-1.9659402406656563,3);\draw
[line width=0.4pt,color=qqwwtt] (4.971184070787558,-3)--
(-1.9659402406656563,-3);\draw [line width=0.4pt,color=ttzzqq]
(-3,-3)-- (-12.716254027339456,-3);\draw [line
width=0.4pt,color=ttzzqq] (-12.716254027339456,-3)--
(-12.648591438653476,3);\draw [line width=0.4pt,color=ttzzqq]
(-12.648591438653476,3)-- (-3,3);\draw [line
width=0.4pt,color=ttzzqq] (-3,3)-- (-1.9659402406656563,3);\draw
[line width=0.4pt,color=ttzzqq] (-1.9659402406656563,3)--
(1.5419165513808526,3);\draw [line width=0.4pt,color=ttzzqq]
(1.5419165513808526,3)-- (5,3);\draw [line width=0.4pt,color=ttzzqq]
(5,3)-- (5.25880771850801,2.9853017354668414);\draw [line
width=0.4pt,color=ttzzqq] (5.25880771850801,2.9853017354668414)--
(5.551692703766248,2.9793244908697343);\draw [line
width=0.4pt,color=ttzzqq] (5.551692703766248,2.9793244908697343)--
(5.79675973224763,2.967370001675521);\draw [line
width=0.4pt,color=ttzzqq] (5.79675973224763,2.967370001675521)--
(6.021742944519114,2.9453672328575173);\draw [line
width=0.4pt,color=ttzzqq] (6.021742944519114,2.9453672328575173)--
(6.264387610015966,2.912279323926129);\draw [line
width=0.4pt,color=ttzzqq] (6.264387610015966,2.912279323926129)--
(6.496002972535688,2.8791914149947404);\draw [line
width=0.4pt,color=ttzzqq] (6.496002972535688,2.8791914149947404)--
(6.716589032078281,2.8461035060633515);\draw [line
width=0.4pt,color=ttzzqq] (6.716589032078281,2.8461035060633515)--
(6.959233697575133,2.8240449001090924);\draw [line
width=0.4pt,color=ttzzqq] (6.959233697575133,2.8240449001090924)--
(7.179819757117726,2.8019862941548337);\draw [line
width=0.4pt,color=ttzzqq] (7.179819757117726,2.8019862941548337)--
(7.3562886047518,2.790956991177704);\draw [line
width=0.4pt,color=ttzzqq] (7.3562886047518,2.790956991177704)--
(7.521498410640398,2.7894793034349874);\draw [line
width=0.4pt,color=ttzzqq] (7.521498410640398,2.7894793034349874)--
(7.709226300019949,2.7799276882005746);\draw [line
width=0.4pt,color=ttzzqq] (7.709226300019949,2.7799276882005746)--
(7.874665844676893,2.768898385223445);\draw [line
width=0.4pt,color=ttzzqq] (7.874665844676893,2.768898385223445)--
(8.062163995288097,2.7578690822463154);\draw [line
width=0.4pt,color=ttzzqq] (8.062163995288097,2.7578690822463154)--
(8.238632842922172,2.7358104762920563);\draw [line
width=0.4pt,color=ttzzqq] (8.238632842922172,2.7358104762920563)--
(8.448189599487634,2.713751870337797);\draw [line
width=0.4pt,color=ttzzqq] (8.448189599487634,2.713751870337797)--
(8.635687750098839,2.691693264383538);\draw [line
width=0.4pt,color=ttzzqq] (8.635687750098839,2.691693264383538)--
(8.823185900710042,2.6806639614064087);\draw [line
width=0.4pt,color=ttzzqq] (8.823185900710042,2.6806639614064087)--
(9.010684051321247,2.6696346584292794);\draw [line
width=0.4pt,color=ttzzqq] (9.010684051321247,2.6696346584292794)--
(9.20921150490958,2.6586053554521496);\draw [line
width=0.4pt,color=ttzzqq] (9.20921150490958,2.6586053554521496)--
(9.385680352543654,2.6475760524750203);\draw [line
width=0.4pt,color=ttzzqq] (9.385680352543654,2.6475760524750203)--
(9.562149200177728,2.6365467494978905);\draw [line
width=0.4pt,color=ttzzqq] (9.562149200177728,2.6365467494978905)--
(9.749647350788932,2.614488143543632);\draw [line
width=0.4pt,color=ttzzqq] (9.749647350788932,2.614488143543632)--
(9.926116198423006,2.5924295375893727);\draw [line
width=0.4pt,color=ttzzqq] (9.926116198423006,2.5924295375893727)--
(10.11361434903421,2.5703709316351135);\draw [line
width=0.4pt,color=ttzzqq] (10.11361434903421,2.5703709316351135)--
(10.290083196668284,2.537283022703725);\draw [line
width=0.4pt,color=ttzzqq] (10.290083196668284,2.537283022703725)--
(10.455522741325229,2.482136507818077);\draw [line
width=0.4pt,color=ttzzqq] (10.455522741325229,2.482136507818077)--
(10.609932983005045,2.4269899929324295);\draw [line
width=0.4pt,color=ttzzqq] (10.609932983005045,2.4269899929324295)--
(10.75331392170773,2.3828727810239116);\draw [line
width=0.4pt,color=ttzzqq] (10.75331392170773,2.3828727810239116)--
(10.885665557433285,2.3166969631611343);\draw [line
width=0.4pt,color=ttzzqq] (10.885665557433285,2.3166969631611343)--
(11.006987890181712,2.2505211452983573);\draw [line
width=0.4pt,color=ttzzqq] (11.006987890181712,2.2505211452983573)--
(11.150368828884396,2.162286721481321);\draw [line
width=0.4pt,color=ttzzqq] (11.150368828884396,2.162286721481321)--
(11.293749767587082,2.074052297664285);\draw [line
width=0.4pt,color=ttzzqq] (11.293749767587082,2.074052297664285)--
(11.426101403312638,1.996847176824378);\draw [line
width=0.4pt,color=ttzzqq] (11.426101403312638,1.996847176824378)--
(11.558453039038193,1.8865541470530829);\draw [line
width=0.4pt,color=ttzzqq] (11.558453039038193,1.8865541470530829)--
(11.690804674763749,1.7983197232360466);\draw [line
width=0.4pt,color=ttzzqq] (11.690804674763749,1.7983197232360466)--
(11.801097704535046,1.72111460239614);\draw [line
width=0.4pt,color=ttzzqq] (11.801097704535046,1.72111460239614)--
(11.91798412170466,1.6157112478213589);\draw [line
width=0.4pt,color=ttzzqq] (11.91798412170466,1.6157112478213589)--
(12.043742370031898,1.5005285428535495);\draw [line
width=0.4pt,color=ttzzqq] (12.043742370031898,1.5005285428535495)--
(12.151925795794947,1.3997650871226337);\draw [line
width=0.4pt,color=ttzzqq] (12.151925795794947,1.3997650871226337)--
(12.231240520643102,1.279942483310959);\draw [line
width=0.4pt,color=ttzzqq] (12.231240520643102,1.279942483310959)--
(12.29741633850588,1.1255322416311455);\draw [line
width=0.4pt,color=ttzzqq] (12.29741633850588,1.1255322416311455)--
(12.356297724469144,0.9639929873863685);\draw [line
width=0.4pt,color=ttzzqq] (12.356297724469144,0.9639929873863685)--
(12.385867469885234,0.7519266050264581);\draw [line
width=0.4pt,color=ttzzqq] (12.385867469885234,0.7519266050264581)--
(12.407709368277176,0.5961256987289283);\draw [line
width=0.4pt,color=ttzzqq] (12.407709368277176,0.5961256987289283)--
(12.403862983276795,0.44600287736993083);\draw [line
width=0.4pt,color=ttzzqq] (12.403862983276795,0.44600287736993083)--
(12.403862983276795,0.23005671667120567);\draw [line
width=0.4pt,color=ttzzqq] (12.403862983276795,0.23005671667120567)--
(12.386402515436012,0);\draw [line width=0.4pt,color=ttzzqq]
(12.386402515436012,0)--
(12.352562853391527,-0.19798411562439758);\draw [line
width=0.4pt,color=ttzzqq]
(12.352562853391527,-0.19798411562439758)--
(12.28638703552875,-0.42959947814411764);\draw [line
width=0.4pt,color=ttzzqq] (12.28638703552875,-0.42959947814411764)--
(12.209181914688843,-0.628126931732449);\draw [line
width=0.4pt,color=ttzzqq] (12.209181914688843,-0.628126931732449)--
(12.115934769011826,-0.8136830600392994);\draw [line
width=0.4pt,color=ttzzqq] (12.115934769011826,-0.8136830600392994)--
(11.966556679685327,-1.0335986504353005);\draw [line
width=0.4pt,color=ttzzqq] (11.966556679685327,-1.0335986504353005)--
(11.861207405760402,-1.1865405414645167);\draw [line
width=0.4pt,color=ttzzqq] (11.861207405760402,-1.1865405414645167)--
(11.668561199182376,-1.3861791237433008);\draw [line
width=0.4pt,color=ttzzqq] (11.668561199182376,-1.3861791237433008)--
(11.454710297252582,-1.565684025640039);\draw [line
width=0.4pt,color=ttzzqq] (11.454710297252582,-1.565684025640039)--
(11.180168072650675,-1.7494497564004416);\draw [line
width=0.4pt,color=ttzzqq] (11.180168072650675,-1.7494497564004416)--
(10.911863936927713,-1.8530579117650317);\draw [line
width=0.4pt,color=ttzzqq] (10.911863936927713,-1.8530579117650317)--
(10.630137027512346,-1.9108451871223033);\draw [line
width=0.4pt,color=ttzzqq] (10.630137027512346,-1.9108451871223033)--
(10.366737813184793,-2.003751930445969);\draw [line
width=0.4pt,color=ttzzqq] (10.366737813184793,-2.003751930445969)--
(10.190414836114842,-2.10936002423165);\draw [line
width=0.4pt,color=ttzzqq] (10.190414836114842,-2.10936002423165)--
(10.053127728032026,-2.2686237785800416);\draw [line
width=0.4pt,color=ttzzqq] (10.053127728032026,-2.2686237785800416)--
(9.920482135241434,-2.4512747786712987);\draw [line
width=0.4pt,color=ttzzqq] (9.920482135241434,-2.4512747786712987)--
(9.786734589257037,-2.5898700183802172);\draw [line
width=0.4pt,color=ttzzqq] (9.786734589257037,-2.5898700183802172)--
(9.659584142640462,-2.7254971614378976);\draw [line
width=0.4pt,color=ttzzqq] (9.659584142640462,-2.7254971614378976)--
(9.506636131403747,-2.82703731957157);\draw [line
width=0.4pt,color=ttzzqq] (9.506636131403747,-2.82703731957157)--
(9.328992981437361,-2.9204611795833135);\draw [line
width=0.4pt,color=ttzzqq] (9.328992981437361,-2.9204611795833135)--
(9.134028963291943,-2.996751447553259);\draw [line
width=0.4pt,color=ttzzqq] (9.134028963291943,-2.996751447553259)--
(8.87972807005879,-3.047611626199889);\draw [line
width=0.4pt,color=ttzzqq] (8.87972807005879,-3.047611626199889)--
(8.642800684440632,-3.0451267205927928);\draw [line
width=0.4pt,color=ttzzqq] (8.642800684440632,-3.0451267205927928)--
(8.3965563729158,-3.039134929758784);\draw [line
width=0.4pt,color=ttzzqq] (8.3965563729158,-3.039134929758784)--
(8.142255479682646,-3.039134929758784);\draw [line
width=0.4pt,color=ttzzqq] (8.142255479682646,-3.039134929758784)--
(7.845571104243967,-3.039134929758784);\draw [line
width=0.4pt,color=ttzzqq] (7.845571104243967,-3.039134929758784)--
(7.531933335923077,-3.039134929758784);\draw [line
width=0.4pt,color=ttzzqq] (7.531933335923077,-3.039134929758784)--
(7.218295567602188,-3.0306582333176793);\draw [line
width=0.4pt,color=ttzzqq] (7.218295567602188,-3.0306582333176793)--
(6.8769689120383815,-3.0175308822286664);\draw [line
width=0.4pt,color=ttzzqq] (6.8769689120383815,-3.0175308822286664)--
(6.459162246016318,-3.002190553042354);\draw [line
width=0.4pt,color=ttzzqq] (6.459162246016318,-3.002190553042354)--
(6.073941548052997,-3.0052281439943638);\draw [line
width=0.4pt,color=ttzzqq] (6.073941548052997,-3.0052281439943638)--
(5.743350386849897,-2.996751447553259);\draw [line
width=0.4pt,color=ttzzqq] (5.743350386849897,-2.996751447553259)--
(5.3873291363234825,-2.996751447553259);\draw [line
width=0.4pt,color=ttzzqq] (5.3873291363234825,-2.996751447553259)--
(4.971184070787558,-3);\draw[color=ffqqqq]
(7.0810365123332145,0.5128702169669421) node {${M}$};\draw
[fill=black] (1.5624374965153105,0) circle
(2.5pt);\draw[color=black] (1.8219014498782096,-0.5742205339156638)
node {$\overline{p}$};\draw[color=DarkGreen]
(-0.054981087308979895,-1.4471412682706412) node
{$\boldsymbol{\Omega_R}$}; \draw[color=black] (14.3,2.7)
node{$\varepsilon$}; \draw[color=NavyBlue] (9.163601615154606,-
1.1087059430581054) node {$\boldsymbol{\Omega}$}; \draw[line
width=2pt, color=RoyalPurple] (-1.9359402406656563,0)-- (4.9,0);
\draw[color=RoyalPurple] (3.36, 0.5) node {$\boldsymbol{B_R}$};
\end{tikzpicture}

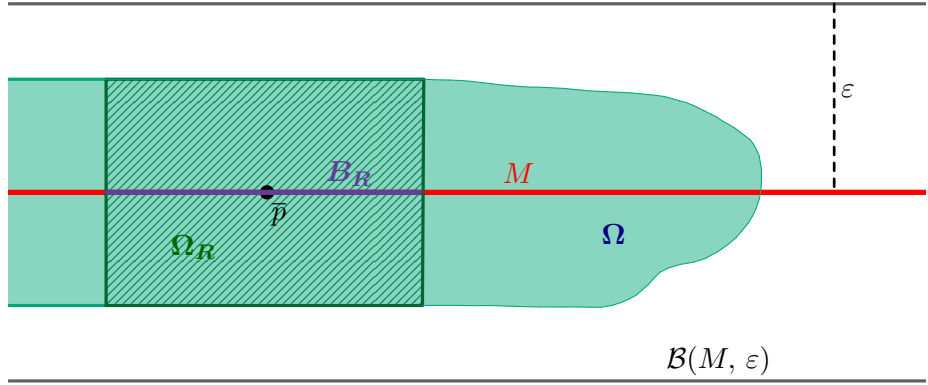
\captionof{figure}{The bounded domain $\Omega_R$.} \label{fig:omega}
\end{center}
\bigskip

Moreover, for a fixed real number $\beta \geq 1$, we will define
    \begin{equation}\label{Eq:Cut-off}
      \psi_R:=[(u-v)^+]^{\beta}\varphi_R^2.
    \end{equation}
With this notation, we can state the following crucial result, whose
proof relies on our assumption that $u \leq v$ on $\partial \Omega$.
\begin{lemma} \label{lem:plugging-psi}
The function $\psi_R$ belongs to $H^1_0\left(\Omega_{2R}\right)$,
and it can be plugged into \eqref{weak1} and \eqref{weak2}.
\end{lemma}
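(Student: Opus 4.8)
The plan is to verify that $\psi_R$ is a bounded Lipschitz function on $\Omega$ which vanishes continuously on $\partial\Omega$, then to approximate it in $H^1(\Omega_{2R})$ by nonnegative functions of $C^\infty_c(\Omega_{2R})$ obtained by first truncating $(u-v)^+$ from below by a positive constant and then mollifying, and finally to insert $\psi_R$ into \eqref{weak1}--\eqref{weak2} by passing to the limit, which is legitimate because $\Omega_{2R}$ is bounded.

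\emph{Regularity.} Since $u,v\in W^{1,\infty}(\Omega)$, the difference $w:=u-v$ is bounded and Lipschitz on $\Omega$, hence so is $w^+$, with $\nabla w^+=\chi_{\{u>v\}}\nabla w$ a.e.; as $\beta\ge 1$, the function $s\mapsto (s^+)^\beta$ is locally Lipschitz (and $C^1$ away from $0$, with derivative $\beta(s^+)^{\beta-1}$), so $[(u-v)^+]^\beta\in W^{1,\infty}(\Omega)$ with $\nabla[(u-v)^+]^\beta=\beta[(u-v)^+]^{\beta-1}\chi_{\{u>v\}}\nabla(u-v)$ a.e. Multiplying by the smooth, bounded, boundedly-differentiable function $\varphi_R^2$ gives $\psi_R\in W^{1,\infty}(\Omega)$, $\psi_R\ge 0$, $\psi_R\in C^0(\overline\Omega)$, and $\psi_R=0$ on $\partial\Omega$ because $u\le v$ there. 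Moreover $B_{2R}\subseteq B_{\mathbb R^n}(\bar p,2R)$, since the Euclidean distance is dominated by $d_M$; hence $\Omega_{2R}\subseteq\mathscr B(B_{2R},\varepsilon)$ is bounded and $\psi_R\in H^1(\Omega_{2R})$.

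\emph{Belonging to $H^1_0(\Omega_{2R})$.} For $\delta>0$ put $\psi_R^\delta:=[(u-v-\delta)^+]^\beta\varphi_R^2$, still Lipschitz on $\Omega$. Here the hypothesis $u\le v$ on $\partial\Omega$ enters: by continuity of $u-v$ on $\overline\Omega$ the set $\{u-v\ge\delta\}$ is closed in $\overline\Omega$ and disjoint from $\partial\Omega$, hence a closed subset of $\Omega$, and being contained in the bounded set $\operatorname{supp}\varphi_R$ it is compact; since $\varphi_R$ is constant in the normal directions and vanishes on $M\setminus B_{2R}$, the support of $\psi_R^\delta$ — contained in $\{u-v\ge\delta\}\cap\operatorname{supp}\varphi_R$ — is therefore a compact subset of $\Omega_{2R}$ (it suffices to take the cut-off with $\operatorname{supp}\varphi_R\Subset B_{2R}$, which is compatible with \eqref{Eq:Cut-off1} up to an irrelevant constant in the gradient bound that is later absorbed by shrinking $\varepsilon$). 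Mollifying $\psi_R^\delta$ inside $\Omega_{2R}$ then produces nonnegative functions in $C^\infty_c(\Omega_{2R})$ converging to it in $H^1$, so $\psi_R^\delta\in H^1_0(\Omega_{2R})$. Letting $\delta\to 0^+$, one has $(u-v-\delta)^+\to (u-v)^+$ uniformly and $\nabla\psi_R^\delta\to\nabla\psi_R$ a.e., all terms being dominated by a fixed constant because $0\le (u-v-\delta)^+\le (u-v)^+\le\|(u-v)^+\|_{L^\infty}$; as $\Omega_{2R}$ has finite measure, dominated convergence yields $\psi_R^\delta\to\psi_R$ in $H^1(\Omega_{2R})$, and $\psi_R\in H^1_0(\Omega_{2R})$ because this subspace is closed. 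To insert $\psi_R$ into \eqref{weak1}--\eqref{weak2} (valid for every nonnegative $\psi\in C^\infty_c(\Omega)$), one tests them against the mollifications $\psi_R^{\delta,\eta}\in C^\infty_c(\Omega_{2R})\subset C^\infty_c(\Omega)$ and lets $\eta\to 0$, then $\delta\to 0$: on the bounded set $\Omega_{2R}$ the weight $a$, the data $f(\cdot,u),f(\cdot,v),|\nabla u|^q,|\nabla v|^q$ are bounded and $\nabla u,\nabla v\in L^\infty$, so the first-order terms converge by $L^2(\Omega_{2R})$-convergence of $\nabla\psi_R^{\delta,\eta}$ and the zero-order terms by uniform convergence of $\psi_R^{\delta,\eta}$; hence \eqref{weak1}--\eqref{weak2} hold with $\psi=\psi_R$.

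The point I expect to need the most care is the claim that $\operatorname{supp}\psi_R^\delta$ is compactly contained in $\Omega_{2R}$ and not merely in $\Omega$: away from $\partial\Omega$ this is guaranteed by the $\delta$-truncation, but near the ``end cap'' $\Omega\cap\partial\mathscr B(B_{2R},\varepsilon)$ one must exploit that $\varphi_R$ is pulled back from $M$ and vanishes on $\partial B_{2R}$; choosing $\operatorname{supp}\varphi_R\Subset B_{2R}$ makes this transparent. (Alternatively, for a.e. $R$ the set $\partial B_{2R}$ is a regular level set of $d_M(\bar p,\cdot)$ and $\Omega_{2R}$ is a Lipschitz domain, so $\psi_R\in C^0(\overline{\Omega_{2R}})\cap W^{1,\infty}(\Omega_{2R})$ with $\psi_R|_{\partial\Omega_{2R}}=0$ already gives $\psi_R\in H^1_0(\Omega_{2R})$.)
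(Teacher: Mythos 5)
Your proof is correct and follows essentially the same route as the paper's: both rest on the observation that $\psi_R$ is Lipschitz on the bounded set $\Omega_{2R}$ and vanishes continuously on $\partial\Omega_{2R}$ (on $\partial\Omega$ because $u\le v$ there, on the lateral part because $\varphi_R$ is pulled back from $M$ and vanishes outside $B_{2R}$), hence lies in $H^1_0(\Omega_{2R})$, after which density of smooth compactly supported functions allows testing. The only difference is that you unfold the $\delta$-truncation-and-mollification argument that the paper delegates to \cite[Theorem 8.12]{Bre} and \cite[Chapter 7]{GT83}.
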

\begin{proof}
We have $\partial \Omega_{2R} \subseteq \partial \Omega  \cup
\partial \mathscr{B}(B_{2R}, \, \e)$; furthermore, by assumption, $(u-v)^+=0$
on $\partial \Omega$, whereas $\varphi_R=0$ on $\partial
\mathscr{B}(B_{2R}, \, \e)$. This means that $\psi_R=0$ on $\partial
\Omega_{2R}$, so the first claim follows from \cite[Chapter
7]{GT83}, cf. also \cite[Theorem 8.12]{Bre}. The second claim is an
immediate consequence of the density of
$H^1_0\left(\Omega_{2R}\right)$ in $C^{\infty}_c(\Omega_{2R})$.
\end{proof}

Plugging $\psi_R$ as a test function into \eqref{weak1} and
\eqref{weak2}
    and subtracting we get
\begin{equation} \label{eq:ABCL}
    \begin{split}
      &\beta\,\,
      \underset{\mathcal{L}_{2R}}{
    \underbrace{\int_{\Omega_{2R}}\,a(z) \,
    [(u-v)^+]^{\beta-1}|\nabla(u-v)|^2 \, \varphi_R^2\, \mathrm{dvol}_g}}\\
    &\leq \int_{\Omega_{2R}}|\Lambda|\; [(u-v)^+]^{\beta} \; \left||\nabla u|^q-|\nabla
    v|^q\right|\, \varphi_R^2\, \mathrm{dvol}_g
    \,+\,\int_{\Omega_{2R}}|f(z,u)-f(z,v)| \;[(u-v)^+]^{\beta}\varphi_R^2\, \mathrm{dvol}_g\\
    &+2\int_{\Omega_{2R}}a(z) \, [(u-v)^+]^{\beta}\, |\nabla(u-v)| \,|\nabla\varphi_R|\,\varphi_R \, \mathrm{dvol}_g\\
    &\leq|\Lambda| \, q \; (\|\nabla u\|_\infty+\|\nabla v\|_\infty)^{q-1}
    \underset{\mathcal{A}_{2R}}{\underbrace{\int_{\Omega_{2R}} [(u-v)^+]^{\beta} \, |\nabla
    (u-v)| \, \varphi_R^2\, \mathrm{dvol}_g}}
    \\
    &+\,L_f \,
    \underset{\mathcal{B}_{2R}}{\underbrace{\int_{\Omega_{2R}}[(u-v)^+]^{\beta+1}
      \varphi_R^2\, \mathrm{dvol}_g}}\\
      &
      + 2\,\underset{\mathcal{C}_{2R}}{\underbrace{\int_{\Omega_{2R}}a(z)\, [(u-v)^+]^{\beta} \, |\nabla (u-v)| \,|\nabla\varphi_R|\, \varphi_R \, \mathrm{dvol}_g}},
    \end{split}
  \end{equation}
  where we estimated  $\left||\nabla u|^q-|\nabla v|^q\right|$ via the Lagrange
  theorem applied to the function $t^q$ and we
  exploited the reverse triangular inequality. Moreover the constant
  $L_f = L_f(\| u\|_\infty+\| v\|_\infty)$ is the one contained in Assumption $\mathrm{\ref{eq:h2}}$. \\
  In the sequel, we will estimate the terms $\mathcal{A}_{2R}$, $\mathcal{B}_{2R}$ and
  $\mathcal{C}_{2R}$ in \eqref{eq:ABCL}, making repeated use of the Young inequality
  \begin{equation*}
    ab \leq  \tau a^2 + \frac{1}{4\tau} b^2, \quad a, \, b\in\mathbb{R},\quad
    \tau>0.
  \end{equation*}

\subsection{Estimate for $\mathcal{A}_{2R}$}

Dealing first with the term $\mathcal{A}_{2R}$, we obtain
\begin{equation} \label{eq:A2r-1}
    \begin{split}
      \mathcal{A}_{2R} & =
      \int_{\Omega_{2R}} [(u-v)^+]^{\beta} \, |\nabla (u-v)| \, \varphi_R^2\, \mathrm{dvol}_g\\
      &=\int_{\Omega_{2R}}\,a(z)^{\frac{1}{2}}\, [(u-v)^+]^{\frac{\beta-1}{2}}\, |\nabla
      (u-v)| \, \varphi_R\,a(z)^{-\frac{1}{2}}\,[(u-v)^+]^{\frac{\beta+1}{2}}\, \varphi_R\, \mathrm{dvol}_g\\
      &\leq \tau \int_{\Omega_{2R}}\,a(z)\,[(u-v)^+]^{\beta-1}| \, \nabla
      (u-v)|^2 \, \varphi_R^2\, \mathrm{dvol}_g+\frac{1}{4\tau}
      \int_{\Omega_{2R}}\,a(z)^{-1}\,[((u-v)^+)^{\frac{\beta+1}{2}}]^2 \, \varphi_R^2\, \mathrm{dvol}_g\\
& =  \tau \mathcal{L}_{2R}\,+  \frac{1}{4\tau}
      \int_{\Omega_{2R}}\,a(z)^{-1}\,[((u-v)^+)^{\frac{\beta+1}{2}}]^2 \, \varphi_R^2\,
      \mathrm{dvol}_g\ .
   \end{split}
      \end{equation}
In order to estimate the integral on the right-hand side of the
above equality, we will consider the oriented atlas of
$\mathscr{B}\left( M, \, \e\right)$ given by
\begin{equation*}
\left\{ \, \left( \mathscr{B}(U_{\alpha}, \, \varepsilon), \,
\Phi_{\alpha}^{-1} \right) \right\},
\end{equation*}
cf. \eqref{eq:atlas-fermi}. We choose a partition of unity $ \left\{
\overline{\rho}_\alpha\right\}_\alpha$, subordinate to the open
cover $\left\{ \mathscr{B}\left( U_\alpha, \, \e\right)
\right\}_\alpha$ of $\mathscr{B}\left(M, \, \e\right)$ and obtained
by pulling back a partition of unity subordinate to the open cover
$\left\{ U_\alpha \right\}_\alpha$ of $M$ via the projection map
\begin{equation*}
\mathscr{B}(M, \, \varepsilon) \longrightarrow M, \quad x \in B(p,
\, \varepsilon) \mapsto p.
\end{equation*}
This implies that the functions $\rho_{\alpha}:=\Phi_\alpha^*\left(
\overline{\rho}_\alpha \right)$, defined on $V_{\alpha}\times
B_{\mathbb{R}^k}(0, \, \varepsilon) $, only depend on the first
factor, namely $\rho_{\alpha} = \rho_{\alpha}(x_\alpha)$. Therefore
for $\varepsilon < \varepsilon_1$ we can write, by using
\eqref{eq:two-dvol}, \eqref{eq:varepsilon_0}, \eqref{eq:dvol-h} and
the Fubini-Tonelli theorem,
\begin{equation} \label{eq:A2r-fubini}
\begin{split}
     & \int_{\Omega_{2R}}\,a(z)^{-1}\,[((u-v)^+)^{\frac{\beta+1}{2}}]^2 \,
     \varphi_R^2\, \mathrm{dvol}_g  \\
     = \sum_{\alpha}& \int_{V_{\alpha} \times B_{\mathbb{R}^k}(0,
     \, \varepsilon)} \rho_{\alpha}(x) \,  a(x, \, y)^{-1} \,
     [((u-v)^+)^{\frac{\beta+1}{2}}]^2 \, \varphi_R^2(x)\, \lambda(x, \, y) \,
     \mathrm{dvol}_{h_{\alpha}}  \\
  \leq \frac{3}{2}\sum_{\alpha}  &   \int_{V_{\alpha}} \rho_{\alpha}(x)
  \, \varphi_R^2(x)\,
   \left( \int_{B_{\mathbb{R}^k}(0, \, \varepsilon)}a(x, \, y)^{-1}
  [((u-v)^+)^{\frac{\beta+1}{2}}]^2\,dy\right)\,\mu(x)\,dx\ .
\end{split}
\end{equation}
\begin{remark}
  In order to simplify the notation, in the above equation and in the following computations
  we drop the subscript $\alpha$ whenever it is not strictly necessary. Moreover,
  with a slight abuse of notation,
  we write $a(x, \, y)$,
  $\varphi_R^2(x)$ and $(u-v)^+$
  instead of $\Phi^*_{\alpha}(a(z))$,
  $\Phi^*_{\alpha}\left(\varphi_R^2(x)\right) $ and $\Phi^*_{\alpha}\left( (u-v)^+
  \right)$, respectively, highlighting for clarity the dependence on the
  variables $x$ and $y$ if necessary.
\end{remark}

Now we estimate the integral on $B_{\mathbb{R}^k}(0, \,
\varepsilon)$ by exploiting the H\"{o}lder inequality, obtaining
\begin{equation} \label{eq:A2r-Holder}
  \begin{split}
    &
    \int_{B_{\mathbb{R}^{k}}(0,\,\e)}\,a(x, \,
    y)^{-1}\,[((u-v)^+)^{\frac{\beta+1}{2}}]^2\,dy
    \\
    \leq &
    \left(\int_{B_{\mathbb{R}^{k}}(0, \,\e)}\,\left({a(x,\,
    y)}^{-1}\right)^{\frac{kt}{2t-k}}\,dy \right)^{\frac{2t-k}{kt}}\cdot
    \left(\int_{B_{\mathbb{R}^{k}}(0,\,\e)}[((u-v)^+)^{\frac{\beta+1}{2}}]^{2^*(t)}\,dy
    \right)^{\frac{2}{2^*(t)}}.
\end{split}
\end{equation}
By exploiting again the H\"{o}lder inequality and making use of
Assumption $\mathrm{\ref{eq:h1}}$, we have
\begin{equation}
\begin{split} \label{eq:A2r-int1}
\int_{B_{\mathbb{R}^{k}}(0, \, \e)}\,\left({a(x,\,
    y)}^{-1}\right)^{\frac{kt}{2t-k}}\,dy & \leq \left(
    \int_{B_{\mathbb{R}^{k}}(0,\, \e)}\,{a(x,\,
    y)}^{-t}\,dy \right)^{\frac{k}{2t-k}} \cdot
\left( \int_{B_{\mathbb{R}^{k}}(0,\, \e)} 1 \,dy
\right)^{\frac{2t-2k}{2t-k}}
\\ & \leq
C_a^{\frac{k}{2t-k}} \left( \Gamma_k \, \e^{k}
\right)^{\frac{2t-2k}{2t-k}} \\  & =  C_a^{\frac{k}{2t-k}} \,
\Gamma_k^{\frac{2t-2k}{2t-k}} \,
\varepsilon^{\frac{k(2t-2k)}{2t-k}},
\end{split}
\end{equation}
were $\Gamma_k$ is the volume of the unit ball in $\mathbb{R}^k$.
Moreover we have, by using \eqref{Sobolev},
\begin{equation} \label{eq:A2r-int2}
\begin{split}
\left(\int_{B_{\mathbb{R}^{k}}(0,\,\e)}[((u-v)^+)^{\frac{\beta+1}{2}}]^{2^*(t)}\,dy\right)^{\frac{2}{2^*(t)}}
& = \norm{((u-v)^+)^{\frac{\beta +
1}{2}}}^2_{{L^{2^*(t)}}(B_{\mathbb{R}^k}(0,
    \, \varepsilon))} \\
    & \leq C_S^2  \norm{\nabla_{\mathbb{R}^k}
    ((u-v)^+)^{\frac{\beta + 1}{2}}}^2_{{L^{2}(B_{\mathbb{R}^k}(0, \,
    \varepsilon), \, a)}} \\
& = C_S^2 \int_{B_{\mathbb{R}^k}(0, \, \varepsilon)}
|\nabla_{\mathbb{R}^k} ((u-v)^+)^{\frac{\beta
    + 1}{2}}|^2 a(x, \,y) dy.
\end{split}
\end{equation}

\begin{remark} \label{rmk:cs}
By Theorem \ref{bvbdvvbidvldjbvlb}, see also
 \cite[proof of Theorem 5.1]{Fms2013},
  it follows that the constant $C_S$ in \eqref{eq:A2r-int2} does not depend  on $\alpha$ because of Assumption $\mathrm{\ref{eq:h1}}$.
\end{remark}

Plugging \eqref{eq:A2r-int1} and \eqref{eq:A2r-int2} into
\eqref{eq:A2r-Holder}, we infer
\begin{equation*}
\begin{split}
\int_{B_{\mathbb{R}^{k}}(0,\,\e)}\,a(x, \,
    y)^{-1}  \, & [((u-v)^+)^{\frac{\beta+1}{2}}]^2\,dy    \leq  {C}_a^{-t}
    {C}_S^2 \Gamma_k^{\frac{2t-2k}{kt}}\e^{\frac{2t-2k}{t}}
    \int_{B_{\mathbb{R}^{k}}(0, \,
    \e)}\,a(x, \,y) \,
    |\nabla_{\mathbb{R}^k}((u-v)^+)^{\frac{\beta+1}{2}}|^{2}\,dy \\
    = &  {C}_a^{-t} {C}_S^2  \Gamma_k^{\frac{2t-2k}{kt}}
    \,\e^{\frac{2t-2k}{t}} \left( \frac{\beta+1}{2} \right)^2
    \int_{B_{\mathbb{R}^{k}}(0, \, \e)}\,a(x, \,y) \,
    \left( \, [(u-v)^+]^{\frac{\beta-1}{2}}|\nabla_{\mathbb{R}^k}(u-v)| \, \right)^2\,dy
   \\
   \leq &  \frac{{C}_a^{-t} {C}_S^2
   \Gamma_k^{\frac{2t-2k}{kt}}\,\e^{\frac{2t-2k}{t}}(\beta+1)^2}{2}
   \int_{B_{\mathbb{R}^{k}}(0, \, \e)}\,a(x, \,y)\,
   [(u-v)^+]^{\beta-1}|\nabla_{\mathbb{R}^k}(u-v)|^2
   \lambda(x, \, y) \,dy,
  \end{split}
\end{equation*}
where the last inequality follows because we have $1 \leq 2
\lambda(x, \, y)$ for $\varepsilon < \varepsilon_1$, see
\eqref{eq:varepsilon_0}.

\begin{remark} \label{rmk:gradientinequality}
  By using Lemma \ref{lemma:metriccomp}, we can write
  \begin{equation*}
    |\nabla_{\Phi_\alpha^* g}(u-v)|^2 = |\nabla_{\mathbb{R}^k}(u-v)|^2  + |\nabla_{h_{\alpha}'}(u-v)|^2+ o(\varepsilon)
  \end{equation*}
   in $V_{\alpha}\times B_{\mathbb{R}^{k}}(0,\, \e)$, uniformly on $\alpha$. So, making $\varepsilon_1$ smaller if necessary, we can assume that the inequality
\begin{equation*}
|\nabla_{\mathbb{R}^k}(u-v)|^2 \leq |\nabla_{\Phi_\alpha^*
g}(u-v)|^2
\end{equation*}
holds in every coordinate chart $V_{\alpha}\times
B_{\mathbb{R}^{k}}(0,\, \e)$, as soon as $\varepsilon <
\varepsilon_1$.

Moreover, since the gradient $\nabla$ is the vector field
representing the differential map with respect to the metric (see
\cite[p. 20]{Pet98}), it is straightforward to check that
\begin{equation} \label{eq:gradient}
    \Phi_{\alpha}^*\nabla_g = \nabla_{\Phi^*_{\alpha}g}\Phi^*_{\alpha}.
\end{equation}
\end{remark}

Using Remark \ref{rmk:gradientinequality}, and noticing that all the
constants involved in the computations are independent on the local
chart $U_{\alpha}$, we obtain
\begin{equation} \label{eq:A2r-final}
  \begin{split}
    &
    \int_{\Omega_{2R}}\,a(z)^{-1}\,[((u-v)^+)^{\frac{\beta+1}{2}}]^2\varphi_R^2\,
    \mathrm{dvol}_g  \\
    \leq & \frac{3 {C}_a^{-t} {C}_S^2
    \Gamma_k^{\frac{2t-2k}{kt}}\,\e^{\frac{2t-2k}{t}}(\beta+1)^2}{4}
    \sum_{\alpha}   \int_{V_{\alpha}} \rho_{\alpha}(x)
     \Bigg( \\
    & \left. \int_{B_{\mathbb{R}^k}(0, \, \varepsilon)}
    \,a(x, \,y) \, [(u-v)^+]^{\beta-1}|\nabla_{\mathbb{R}^k}(u-v)|^2\, \varphi_R^2(x)
    \lambda(x, \, y)
    dy \right)\mu(x)\, dx \\
    \leq & \frac{3 {C}_a^{-t} {C}_S^2
    \Gamma_k^{\frac{2t-2k}{kt}}\,\e^{\frac{2t-2k}{t}}(\beta+1)^2}{4}
    \sum_{\alpha}   \int_{V_{\alpha}} \rho_{\alpha}(x)
     \Bigg( \\
    & \left. \int_{B_{\mathbb{R}^k}(0, \, \varepsilon)}
    \,a(x, \,y) \, [(u-v)^+]^{\beta-1}|\nabla_{\Phi_\alpha^* g}(u-v)|^2\, \varphi_R^2(x)
    \lambda(x, \, y)
    dy \right) \mu(x)\,dx \\
    = & \frac{3 {C}_a^{-t} {C}_S^2
    \Gamma_k^{\frac{2t-2k}{kt}}\,\e^{\frac{2t-2k}{t}}(\beta+1)^2}{4}
    \sum_{\alpha}   \int_{V_{\alpha}\times B_{\mathbb{R}^k}(0, \,
  \varepsilon)} \rho_{\alpha}(x) \, \bigg(
    \,a(x, \,y) \, [(u-v)^+]^{\beta-1}  \\
    &  |\nabla_{\Phi_\alpha^* g}(u-v)|^2\, \varphi_R^2(x) \,\bigg)\,
    \mathrm{dvol}_{\Phi_\alpha^* g} \\
    = & \frac{3 {C}_a^{-t} {C}_S^2
    \Gamma_k^{\frac{2t-2k}{kt}}\,\e^{\frac{2t-2k}{t}}(\beta+1)^2}{4}
    \int_{\Omega_{2R}}\,a(z)
    [(u-v)^+]^{\beta-1}|\nabla(u-v)|^2\varphi_R^2\, \mathrm{dvol}_g \\
    = & \frac{3 C_a^{-t} {C}_S^2
    \Gamma_k^{\frac{2t-2k}{kt}}\,\e^{\frac{2t-2k}{t}}(\beta+1)^2}{4}
    \mathcal{L}_{2R}.
  \end{split}
\end{equation}
Summing up, if $\varepsilon \leq \varepsilon_1$, from
\eqref{eq:A2r-1} and \eqref{eq:A2r-final} we finally get
\begin{equation} \label{eq:A2R}
  \mathcal{A}_{2R} \leq \left(\tau +  \frac{3 C_a^{-t} {C}_S^2
  \Gamma_k^{\frac{2t-2k}{kt}}\,\e^{\frac{2t-2k}{t}}(\beta+1)^2}{16 \tau}
  \right) \mathcal{L}_{2R}.
\end{equation}

\subsection{Estimate for $\mathcal{B}_{2R}$}
Let us now estimate the term $\mathcal{B}_{2R}$. As before, for
$\varepsilon \leq \varepsilon_1$ we have
\begin{equation*}
  \begin{split}
    \mathcal{B}_{2R} & = \int_{ \Omega_{2R}} [(u-v)^{+}]^{\beta+1}
    {\varphi_R}^2 \mathrm{dvol}_g  \\
    & = \sum_{\alpha} \int_{V_{\alpha} \times B_{\mathbb{R}^k}(0, \,
    \varepsilon)}
    \rho_{\alpha}(x) [(u-v)^{+}]^{\beta+1} {\varphi_R}^2(x) \lambda(x, \,
    y) \, \mathrm{dvol}_{h_{\alpha}} \\
    \leq & \frac{3}{2}  \sum_{\alpha} \int_{V_{\alpha}} \rho_{\alpha}(x)
    {\varphi_R}^2(x)  \left( \int_{B_{\mathbb{R}^k}(0, \, \varepsilon)}
    [(u-v)^{+}]^{\beta+1} dy \right) \mu(x) \,dx.
  \end{split}
\end{equation*}
Now we estimate the integral on $B_{\mathbb{R}^k}(0, \,
\varepsilon)$ by using the H\"{o}lder inequality and
\eqref{eq:A2r-int2}, obtaining
\begin{equation} \label{eq:estimateB}
  \begin{split}
    \int_{B_{\mathbb{R}^k}(0, \, \varepsilon)} & [(u-v)^{+}]^{\beta+1} \,
    dy \leq
    \left(\int_{B_{\mathbb{R}^{k}}(0, \,
    \e)}\,1\,\,dy\right)^{\frac{2t-k}{kt}}\cdot
    \left(\int_{B_{\mathbb{R}^{k}}(0, \, \e)}
    [((u-v)^+)^{\frac{\beta+1}{2}}]^{2^*(t)}\,dy \right)^{\frac{2}{2^*(t)}} \\
    & \leq \left(\Gamma_k  \, \varepsilon^k \right)^{\frac{2t-k}{kt}}C_S^2
    \int_{B_{\mathbb{R}^k}(0, \, \varepsilon)} |\nabla_{\mathbb{R}^k}
    ((u-v)^+)^{\frac{\beta
    + 1}{2}}|^2 \, a(x, \, y) dy  \\
    & = \Gamma_k^{\frac{2t-k}{kt}} \, \varepsilon^{\frac{2t-k}{t}}C_S^2
    \left(\frac{\beta+1}{2} \right)^2
    \int_{B_{\mathbb{R}^k}(0, \, \varepsilon)} \big( \,[(u-v)^+]
    ^{\frac{\beta-1}{2}} \, |\nabla_{\mathbb{R}^k}(u-v)| \, \big)^2 \, a(x, \,
    y) \, dy  \\
    & = \frac{\Gamma_k^{\frac{2t-k}{kt}} \, \varepsilon^{\frac{2t-k}{t}}C_S^2
    \,
    (\beta+1)^2}{4}
    \int_{B_{\mathbb{R}^k}(0, \, \varepsilon)} [(u-v)^+]^{\beta-1}
    |\nabla_{\mathbb{R}^k}(u-v)|^2\, a(x, \, y) \,dy \\
    & \leq \frac{\Gamma_k^{\frac{2t-k}{kt}} \,
    \varepsilon^{\frac{2t-k}{t}}C_S^2 \,
    (\beta+1)^2}{2}
    \int_{B_{\mathbb{R}^k}(0, \, \varepsilon)} [(u-v)^+]^{\beta-1}
    |\nabla_{\mathbb{R}^k}(u-v)|^2 \, a(x, \, y) \, \lambda(x, \, y) \, dy.
  \end{split}
\end{equation}
Arguing as in \eqref{eq:A2r-final} and using Remark
\ref{rmk:gradientinequality}, we deduce
\begin{equation} \label{eq:B2r-final}
  \begin{split}
    \mathcal{B}_{2R} & \leq \frac{3 \, \Gamma_k^{\frac{2t-k}{kt}} \,
    \varepsilon^{\frac{2t-k}{t}}C_S^2 \, (\beta+1)^2}{4}  \sum_{\alpha}
    \int_{V_{\alpha}} \rho_{\alpha}(x) \,
     \Bigg( \\
    & \left. \int_{B_{\mathbb{R}^k}(0, \, \varepsilon)}
    \,a(x, \, y)\, [(u-v)^+]^{\beta-1} \, |\nabla_{\mathbb{R}^k}(u-v)|^2\,
    \lambda(x, \, y) \, \varphi_R^2(x)
   \, dy \right)  \mu(x)\, dx \\
    & \le \frac{3 \, \Gamma_k^{\frac{2t-k}{kt}} \,
    \varepsilon^{\frac{2t-k}{t}}C_S^2 \,
    (\beta+1)^2}{4} \sum_{\alpha}   \int_{V_{\alpha}\times B_{\mathbb{R}^k}(0, \,
  \varepsilon)} \rho_{\alpha}(x)\,  \bigg(
    \,a(x, \,y) \, [(u-v)^+]^{\beta-1} \\
    &  |\nabla_{\Phi_\alpha^* g}(u-v)|^2\, \varphi_R^2(x) \, \bigg) \,
    \mathrm{dvol}_{\Phi_\alpha^* g} \\
    & = \frac{3 \, \Gamma_k^{\frac{2t-k}{kt}} \,
    \varepsilon^{\frac{2t-k}{t}}C_S^2 \,
    (\beta+1)^2}{4} \int_{\Omega_{2R}}\,a(z) \,
    [(u-v)^+]^{\beta-1} \, |\nabla(u-v)|^2 \,\varphi_R^2 \, \mathrm{dvol}_g.
  \end{split}
\end{equation}
Summing up, for $\varepsilon < \varepsilon_1$ we get
\begin{equation} \label{eq:B2R}
\mathcal{B}_{2R} \leq \frac{3 C_S^2 \, \Gamma_k^{\frac{2t-k}{kt}} \,
\varepsilon^{\frac{2t-k}{t}} \, (\beta+1)^2}{4} \mathcal{L}_{2R}.
\end{equation}

\subsection{Estimate for $\mathcal{C}_{2R}$}
Finally, let us estimate the term $\mathcal{C}_{2R}$. Exploiting
the Young inequality and the last inequality in \eqref{Eq:Cut-off1},
we get
\begin{equation*}
  \begin{split}
    \mathcal{C}_{2R} & =  \int_{\Omega_{2R}} a(z) \, [(u-v)^+]^{\beta}\,
    |\nabla(u-v)| \,|\nabla\varphi_R|\,  \varphi_R \, \mathrm{dvol}_g \\
     & = \int_{\Omega_{2R}}\,a(z)^{\frac{1}{2}}\, [(u-v)^+]^{\frac{\beta-1}{2}}\, |\nabla
    (u-v)| \, \varphi_R \, a(z)^{\frac{1}{2}}\,[(u-v)^+]^{\frac{\beta+1}{2}}\, |\nabla\varphi_R|\, \mathrm{dvol}_g\\
    & \leq \tau'   \int_{\Omega_{2R}}\,a(z)\,[(u-v)^+]^{\beta-1}\, |\nabla
    (u-v)|^2\,\varphi_R^2\,\mathrm{dvol}_g  \\
    &  + \frac{1}{4 \tau'}
    \int_{\Omega_{2R}}\,a(z) \, [(u-v)^+]^{\beta+1}\,|\nabla\varphi_R|^2 \, \mathrm{dvol}_g\\
& \leq \tau' \mathcal{L}_{2R} + \frac{\norm{a}_{\infty}}{\tau' R^2}
\int_{\Omega_{2R}} [(u-v)^+]^{\beta+1}\,\mathrm{dvol}_g.
\end{split}
\end{equation*}
If $\varepsilon < \varepsilon_1$ the integral at the right-hand side
can be estimated by \eqref{eq:estimateB}, obtaining
\begin{equation} \label{eq:C2r-final}
  \begin{split}
    \int_{  \Omega_{2R}}& [(u-v)^{+}]^{\beta+1} \, \mathrm{dvol}_g  =
    \sum_{\alpha} \int_{V_{\alpha} \times B_{\mathbb{R}^k}(0, \, \varepsilon)}
    \rho_{\alpha}(x)\, [(u-v)^{+}]^{\beta+1} \, \lambda(x, \, y)
    \, \mathrm{dvol}_{h_{\alpha}} \\
    & \leq  \frac{3}{2}   \sum_{\alpha} \int_{V_{\alpha}} \rho_{\alpha}(x)
    \,     \left(
    \int_{B_{\mathbb{R}^k}(0, \, \varepsilon)}
    [(u-v)^{+}]^{\beta+1} \,dy \right) \mu(x)\, dx \\
    & \leq \frac{ 3 \Gamma_k^{\frac{2t-k}{kt}}
    \varepsilon^{\frac{2t-k}{t}} C_S^2
    (\beta+1)^2}{4} \sum_{\alpha} \int_{V_{\alpha}} \rho_{\alpha}(x)
    \,  \Bigg( \\
    & \left. \int_{B_{\mathbb{R}^k}(0, \, \varepsilon)}
    \,a(x, \,y) \, [(u-v)^+]^{\beta-1} \, |\nabla_{\mathbb{R}^k}(u-v)|^2\,
    \lambda(x, \, y) \,
    dy \right) \mu(x) \, dx \\
    & \le \frac{3 \, \Gamma_k^{\frac{2t-k}{kt}} \,
    \varepsilon^{\frac{2t-k}{t}}C_S^2 \,
    (\beta+1)^2}{4}
    \sum_{\alpha}   \int_{V_{\alpha}\times B_{\mathbb{R}^k}(0, \,
    \varepsilon)} \rho_{\alpha}(x) \, \bigg(
    \,a(x, \,y) \, [(u-v)^+]^{\beta-1}  \\
    &  |\nabla_{\Phi_\alpha^* g}(u-v)|^2\, \varphi_R^2(x) \, \bigg)
    \, \mathrm{dvol}_{\Phi_\alpha^* g} \\
    & = \frac{3 \, \Gamma_k^{\frac{2t-k}{kt}} \,
    \varepsilon^{\frac{2t-k}{t}}C_S^2 \,
    (\beta+1)^2}{4} \int_{\Omega_{2R}}\,a(z) \,
    [(u-v)^+]^{\beta-1} \, |\nabla(u-v)|^2\, \mathrm{dvol}_g.
  \end{split}
\end{equation}
Summing up, for $\varepsilon < \varepsilon_1$ we get
\begin{equation} \label{eq:C2R}
\mathcal{C}_{2R} \leq \tau' \mathcal{L}_{2R} + \frac {3 \, C_S^2\,
\Gamma_k^{\frac{2t-k}{kt}} \, \varepsilon^{\frac{2t-k}{t}} \,
(\beta+1)^2 \, \norm{a}_{\infty}}{4 \tau' R^2}
\int_{\Omega_{2R}}\,a(z) \, [(u-v)^+]^{\beta-1} \, |\nabla(u-v)|^2\,
\mathrm{dvol}_g.
\end{equation}

\subsection{End of the proof} \label{subsec:end}
We can now specialize the computations above, using particular
values for the parameters. Fixing, for instance,
\begin{equation*}
  \beta = 2, \qquad \tau'=\frac{1}{4},
\end{equation*}
we can rewrite inequalities \eqref{eq:A2R}, \eqref{eq:B2R},
\eqref{eq:C2R} as follows:
\begin{equation*}
\begin{split}
\mathcal{A}_{2R} & \leq \left( \tau +
\frac{\Theta_{\mathcal{A}}}{\tau} \right) \mathcal{L}_{2R}, \\
\mathcal{B}_{2R} &
\leq \Theta_{\mathcal{B}}\, \mathcal{L}_{2R}, \\
\mathcal{C}_{2R} & \leq  \frac{1}{4} \, \mathcal{L}_{2R} +
\frac{\Theta_{\mathcal{C}}}{R^2} \int_{\Omega_{2R}}\,a(z) \, (u-v)^+
\, |\nabla(u-v)|^2\, \mathrm{dvol}_g,
\end{split}
\end{equation*}
where
\begin{equation} \label{eq:wTheta}
\begin{split}
\Theta_{\mathcal{A}} & = \frac{ 27 \, C_a^{-t} {C}_S^2
  \Gamma_k^{\frac{2t-2k}{kt}}\,\e^{\frac{2t-2k}{t}}}{16} \\
\Theta_{\mathcal{B}} & = \frac{27 \, C_S^2 \,
\Gamma_k^{\frac{2t-k}{kt}}
\, \varepsilon^{\frac{2t-k}{t}}}{4} \\
\Theta_{\mathcal{C}} & = 27 \, C_S^2\, \Gamma_k^{\frac{2t-k}{kt}} \,
\varepsilon^{\frac{2t-k}{t}} \,
 \norm{a}_{\infty},
\end{split}
\end{equation}
so that \eqref{eq:ABCL} becomes
\begin{equation*} \label{eq:ABCL-1}
\begin{split}
2 \mathcal{L}_{2R} & \leq |\Lambda| \, q \; (\norm{\nabla u}_{\infty} + \norm{\nabla v}_{\infty})^{q-1} \left( \tau +\frac{\Theta_{\mathcal{A}}}{\tau} \right) \, \mathcal{L}_{2R} +  L_f  \, \Theta_{\mathcal{B}}\, \mathcal{L}_{2R} \\
 & +  \frac{1}{2} \, \mathcal{L}_{2R} + \frac{2 \, \Theta_{\mathcal{C}}}{R^2}  \int_{\Omega_{2R}}\,a(z) \,
(u-v)^+ \, |\nabla(u-v)|^2\, \mathrm{dvol}_g.
\end{split}
\end{equation*}
Finally, we fix
\begin{equation*}
\tau = \frac{1}{2\,|\Lambda|\, q\; (\|\nabla u\|_\infty + \|\nabla
  v\|_\infty )^{q-1}}
\end{equation*}
and we resume the previous computations as
  \begin{equation} \label{eq:L-Theta1-Theta2}
\mathcal{L}_{2R} \leq \Theta_1 \, \mathcal{L}_{2R} +
\frac{\Theta_2}{R^2}  \int_{\Omega_{2R}}\,a(z) \,(u-v)^+ \,
|\nabla(u-v)|^2\, \mathrm{dvol}_g,
  \end{equation}
where
\begin{equation} \label{eq:Theta1-Theta2}
\Theta_1:= \left( 2 \, |\Lambda|^2 \, q^2 \; (\norm{\nabla
 u}_{\infty} + \norm{\nabla v}_{\infty})^{2q-2}
\, \Theta_{\mathcal{A}} + L_f \Theta_{\mathcal{B}} \right), \qquad
\Theta_2 := 2 \Theta_{\mathcal{C}}.
\end{equation}
Let us now set
  \begin{equation*}
    \widetilde{\mathcal{
    L}}_{R}\,:=\int_{\Omega_{R}}\,a(z) \,(u-v)^+ \, |\nabla(u-v)|^2\, \mathrm{dvol}_g,
\end{equation*}
    so that, from the properties of $\varphi_R$ stated in \eqref{Eq:Cut-off1}, it follows
    \begin{equation*}
      \widetilde{\mathcal{L}}_{R} \leq \mathcal{L}_{2R} \le
      \widetilde{\mathcal{L}}_{2R}.
    \end{equation*}
    Therefore by \eqref{eq:L-Theta1-Theta2} we get
    \begin{equation} \label{eq:theta-final}
      \widetilde{\mathcal{L}}_{R} \leq \theta \widetilde{\mathcal{L}}_{2R},
    \end{equation}
where
\begin{equation*}
\theta:= \left( \Theta_1 + \frac{\Theta_2}{R^2} \right).
\end{equation*}
Looking at \eqref{eq:wTheta} and \eqref{eq:Theta1-Theta2}  we see
that, taking  $\e  < \e_1$ sufficiently small and $R$ sufficiently
large, we have $\theta<2^{-\gamma}$, where $\gamma$ is as in
Assumption $\mathrm{\ref{eq:h4}}$.
    Moreover, we are supposing $a, \, u, \, v, \, \nabla u, \, \nabla v \in L^\infty\left( \Omega \right)$ so, using Assumption $\mathrm{\ref{eq:h4}}$, we obtain
    \begin{equation*}
      \widetilde{\mathcal{L}}_{R} \leq C R^{\gamma}\ ,
    \end{equation*}
    where we set $C:=C_1 \, \|a(z) \,(u-v)^+ \, |\nabla(u-v)|^2\|_{L^\infty\left( \Omega \right)}$.

Summing up, there exists $0 < \varepsilon_0 < \varepsilon_1$  such
that, if $\varepsilon < \varepsilon_0$,   we can apply  Lemma
\ref{Le:L(R)} in order to deduce $\mathcal{\widetilde{L}}_{R}=0$ for
all $R >0$. This in turn implies $(u-v)^+ \, |\nabla(u-v)^+|^2=0$ in
the whole of $\Omega$. Since $(u-v)^+=0$ on $\partial \Omega$, we
get $(u-v)^+=0$ everywhere, that is $u \leq v$ on the whole of
$\Omega$ and the weak comparison principle is proved.

\bigskip

\section*{Acknowledgements} We are grateful to the MathOverflow users
Jaap Eldering, Ivan Izmestiev, macbeth, Anton Petrunin, Raziel, Rbega, Deane Yang, Fan Zheng for interesting discussions in the threads \\ \verb|https://mathoverflow.net/questions/283467|, \\ \verb|https://mathoverflow.net/questions/284388| \\ \verb|https://mathoverflow.net/questions/285061|.  \\
We also thank Venere Fusaro for her help with Figure \ref{fig:omega}. \\

\bigskip \bigskip

    \end{document}